\newtheorem{theorem}{Theorem}[section]
\newtheorem{definition}{Definition}[section]
\newtheorem{lemma}{Lemma}[section]
\newenvironment{proof}[1][Proof]{\textbf{#1.} }{\ \rule{0.5em}{0.5em} \vspace{1ex}}
\def\real{\mathbb{R}}
\DeclareMathOperator{\es}{ES}
\DeclareMathOperator{\nr}{nr}
\DeclareMathOperator{\h}{H}
\DeclareMathOperator{\cl}{CL}
\DeclareMathOperator{\re}{r}
\DeclareMathOperator{\cmaes}{CMA-ES}
\DeclareMathOperator{\LB}{lb}
\DeclareMathOperator{\UB}{ub}
\DeclareMathOperator{\trial}{trial}
\begin{document}

\title{A Merit Function Approach for Evolution Strategies}
\author{
Youssef Diouane\thanks{Institut Sup\'erieur de l'A\'eronautique et de l'Espace (ISAE-SUPAERO), Universit\'e de Toulouse, 31055 Toulouse Cedex 4, France
 ({\tt youssef.diouane@isae.fr}).
}
}
\maketitle
\footnotesep=0.4cm
{\small
\begin{abstract}
In this paper, we extend a class of globally convergent evolution strategies to handle general constrained optimization problems. The proposed framework handles relaxable constraints using a merit function approach combined with a specific restoration procedure. The unrelaxable constraints in our framework, when present, are treated either by using the extreme barrier function or through a projection approach.

The introduced extension guaranties to the regarded class of evolution strategies global convergence properties for first order stationary constraints.
Preliminary numerical experiments are carried out on a set of known test problems as well as on a multidisciplinary design optimization problem. 
\end{abstract}

\bigskip

\begin{center}
\textbf{Keywords:}
Derivative-free optimization, evolution strategies, merit function, global convergence.
\end{center}
}

\section{Introduction}
In this paper, we are interested by the following constrained optimization problem: 
\begin{equation} \label{c:Prob1}
\begin{split}
\min\quad & f(x)\\
\mbox{s.t.} \quad&   x \in \Omega = \Omega_{\re} \cap \Omega_{\nr},
\end{split}
\end{equation}
where the objective function $f$  is assumed to be locally Lipschitz continuous.
The feasible region $\Omega \subset \real^n $ of this problem includes two categories of constraints. The first one, denoted by $\Omega_{\re}$ and known as relaxable constraints (or soft constraints), is allowed to be violated during the optimization process and may need to be satisfied only approximately or asymptotically. Such set of constraints will be assumed, in the context of this paper, to be of the form:
$$
 \Omega_{\re} = \left\{ x \in \real^n |  \forall i \in \{1,\dots, r\} , c_i(x) \leq 0 \right\},$$ 
 where the functions $c_i$ are locally Lipschitz continuous.
The second category of constraints, denoted by $\Omega_{\nr} \subset \real^n$, gathers all unrelaxable constraints (also known as hard constraints), for such constraints no violation is allowed and they require satisfaction during all the optimization process. The set of constraints $\Omega_{\nr}$ can be seen as bounds or linear constraints. Many practical optimization problems present both relaxable and unrelaxable constraints. For instance, in multidisciplinary design optimization problems \cite{RPerez_etal_2004}, one has to deal with different coupled disciplines (e.g., structure, aerodynamics, propulsion) that represent the aircraft model. In this case, the appropriate design can be chosen by maximizing the aircraft range under bounds constraints and subject to many relaxable constraints composed of the involved disciplines.

Evolution strategies (ES's)~\cite{IRechenberg_1973} are one of the most successful stochastic optimization algorithms, seen as a class of 
evolutionary algorithms that are naturally parallelizable, appropriate for continuous optimization, and that lead to promising results on practical optimization problems~\cite{AAuger_NHansen_ZJPerez_RRos_MSchoenauer_2009,LMRios_NVSahinidis_2010,ZBouzarkouna_2012}. In~\cite{YDiouane_SGratton_LNVicente_2015_a,YDiouane_SGratton_LNVicente_2015_b}, the authors dealt with
a large class of ES's, where a certain number $\lambda$ of points (called offspring)
are randomly generated in each iteration, among which $\mu \leq \lambda$ of them (called parents) are
selected. ES's have been growing rapidly in popularity and start to be used for solving challenging 
optimization problems~\cite{NHansen_etal_2010,AAuger_DBrockhoff_NHansen_2013}.

In \cite{YDiouane_SGratton_LNVicente_2015_b}, the authors proposed a general globally convergent framework
for unrelaxable constraints using two different approaches.
The first one relies on techniques inspired from directional
direct-search methods~\cite{ARConn_KScheinberg_LNVicente_2009,TGKolda_RMLewis_VTorczon_2003},
where one uses an extreme barrier function
to prevent unfeasible displacements together with the possible use of
directions that conform to the local geometry of the feasible region.
The second approach was based  on enforcing all the generated
sample points to be feasible, all by using a projection mapping approach.
Both proposed strategies were compared to some of the best  available solvers for minimizing a function
without derivatives. The obtained numerical results confirmed
the competitiveness of the two approaches in terms of efficiency as well as robustness. 
Motivated by
the recent availability of massively parallel computing platforms, the authors in \cite{YDiouane_SGratton_XVasseur_LNVicente_HCalandra_2016} proposed a highly parallel globally convergent ES (inspired by \cite{YDiouane_SGratton_LNVicente_2015_b}) adapted to the full-waveform inversion setting.
By combining model reduction and ES's in a parallel environment, the authors helped solving realistic
instances of the  full-waveform inversion problem. 

For ES's enormous state of the art of constraints handling algorithms have been proposed \cite{constbib_Coello}. 
Coello~\cite{Coello_2002} and Kramer~\cite{Kramer_2010} outlined a 
comprehensive survey of the most popular handling constraints 
methods currently used with ES's. To the best of our knowledge, all the proposed ES's suffer from the lack of global convergence guarantees when applied to general constrained optimization problems.

In the framework of deterministic Derivative free optimization (DFO), only few works were interested to handle both kinds (relaxable and unrelaxable) of constraints separately. For instance,  Audet and 
Dennis~\cite{Audet_Dennis_2009} outlines a globally convergent  direct-search approach based on a progressive barrier, 
it combines an extreme barrier approach for unrelaxable constraints and
non-dominance filters~\cite{Fletcher_Leyffer_2002} to handle relaxable constraints.  More recently, the authors in \cite{CAudet_ARConn_SLeDigabel_MPeyrega_2018} extended the progressive barrier approach, developed in \cite{Audet_Dennis_2009}, to cover the setting of a derivative-free trust-region method. In the frame of directional
direct-search methods, Vicente and Gratton~\cite{SGratton_LNVicente_2014} 
proposed an alternative where one handles relaxable constraints by means of a merit function. 
The latter approach ensures global convergence by imposing a sufficient decrease condition on a merit function combining information from both objective function and constraints violation.  Another two-phases derivative-free approach is proposed in \cite{JMMartinez_FNCSobral_2013} to handle specifically the case where finding a 
feasible point is easier than minimizing the objective function.

 In this paper, inspired by the merit function approach for direct search methods~\cite{SGratton_LNVicente_2014}, we propose to adapt the class of ES algorithms proposed in \cite{YDiouane_SGratton_LNVicente_2015_b} to handle both relaxable and unrelaxable constraints.  
The obtained class of ES algorithms relies essentially on a merit function (eventually with a restoration procedure) to decide and control the distribution of the offspring points. The merit function is a standard penalty-based function that has been already proposed in the context of ES~\cite{Coello_2002}.  The main advantage of the proposed approach is guaranteeing a form of global convergence. To the best of our knowledge, this paper presents the first globally convergent framework that handles relaxable and unrelaxable constraints in the context of ES's.

The proposed convergence theory generalizes the ES framework in~\cite{YDiouane_SGratton_LNVicente_2015_b} by including relaxable constraints, all in the spirit of the proposed merit function for directional direct search methods~\cite{SGratton_LNVicente_2014}. The contributions of this paper are the following. We propose an adaptation of the merit function approach algorithm to the ES setting, a detailed convergence theory of the proposed approach is given. We provide also a practical implementation on some known global optimization problems as well as  some tests on a multidisciplinary design optimization (MDO) problem. The performance of our proposed solver are compared to the progressive barrier approach implemented in mesh adaptive direct search (MADS) solver~\cite{Audet_Dennis_2009}.

The paper is organized as follows. Section~\ref{sec:unrelax} reminds the class of ES algorithms proposed in~\cite{YDiouane_SGratton_LNVicente_2015_b} to handle unrelaxable constraints. 
The proposed merit function approach is then given in
Section~\ref{sec:merit:algo} with a detailed description of the changes introduced in a class of 
ES algorithms in order to handle general constraints. The convergence results of the adapted 
approach are then detailed in Section~\ref{sec:global_convergence}. In Section~\ref{sec:numerical_illustration}, we test the proposed algorithm on well-known constrained optimization test problems and an MDO problem.  Finally, we conclude the paper in Section 
~\ref{sec:conclusion} with some conclusions and prospects of  future work.

\section{A globally convergent ES for unrelaxable constraints}
\label{sec:unrelax}
This paper focuses on a class of ES's, denoted by $(\mu/\mu_W,\lambda)$--ES, which evolves a single candidate solution. 
In fact, at the $k$-th iteration, a new population $y_{k+1}^1,\ldots,
y_{k+1}^{\lambda}$ (called offspring) is generated around a weighted mean~$x_k$ of the previous parents (candidate solution).
The symbol ``$/\mu_W$'' in $(\mu/\mu_W,\lambda)$--ES specifies that~$\mu$ parents
are ``recombined'' into a weighted mean. The parents are selected as the $\mu$~best offspring 
of the previous iteration in terms of the objective function value.
The mutation operator of the new offspring points is done
by $y_{k+1}^i = x_k +\sigma_{k}^{\es} d_k^i$, $i=1,\ldots,\lambda$, where
$d_k^i$ is drawn from a certain distribution $\mathcal{C}_k$ and $\sigma_{k}^{\es}$
is a chosen step size. 
The weights used to compute the means belong to the simplex set
$S = \{ (\omega^1,\ldots,\omega^{\mu}) \in \mathbb{R}^{\mu}: \sum_{i=1}^{\mu} w^i = 1,
w^i \geq 0, i=1,\ldots,\mu \}$. The $(\mu/\mu_W,\lambda)$--ES adapts the sampling distribution to the landscape of the objective function. 
An adaptation mechanism for the step size parameter is also possible. The latter one increases or 
decreases depending on the landscape of the objective function. One relevant instance of
such an ES is covariance matrix adaptation ES (CMA-ES) \cite{NHansen_AOstermeier_AGawelczyk_1995}.

In~\cite{YDiouane_SGratton_LNVicente_2015_a,YDiouane_SGratton_LNVicente_2015_b}, the authors proposed a framework for making a class of ES's enjoying some global convergence properties while solving optimization problems possibly with unrelaxable constraints. In fact, in~\cite{YDiouane_SGratton_LNVicente_2015_a}, by imposing a sufficient decreasing condition on the objective function value, the proposed algorithm was
monitoring the step size~$\sigma_k$ to ensure its convergence to zero (which leads then to the existence of a stationary point). The imposed sufficient decreasing condition applied
directly to the weighted mean~$x_{k+1}^{\trial}$ of the new parents. By sufficient decreasing condition we mean $f(x_{k+1}^{\trial}) \leq f(x_k) - \rho(\sigma_k)$, where $\rho(\cdot)$ is
a forcing function~\cite{TGKolda_RMLewis_VTorczon_2003}, i.e.,
a positive, nondecreasing function satisfying
$\rho(\sigma)/\sigma \rightarrow 0$ when $\sigma \rightarrow 0$.  To handle unrelaxable constraints~\cite{YDiouane_SGratton_LNVicente_2015_b}, one
starts with a feasible iterate $x_0$ and then prevents stepping outside the feasible region by means
of a barrier approach. In this context, the sufficient decrease condition is applied not to $f$ but to the extreme barrier function $f_{\Omega_{\nr}}$ associated to $f$ with respect to the constraints set $\Omega_{\nr}$~\cite{CAudet_JEDennis_2006} (also known as the death penalty function in the terminology of evolutionary algorithms), which is defined by:
\begin{equation} \label{extreme}
f_{\Omega_{\nr}}(x) \; = \; \left\{\begin{array}{ll}f(x) & \textrm{if } x \in {\Omega_{\nr}},\\ +\infty & \textrm{otherwise.}
\end{array}\right.
\end{equation}
We consider that ties of $+\infty$ are broken arbitrarily in the ordering of the offspring samples. The obtained globally convergent ES is  given by Algorithm~\ref{alg:ESgc}.

 \LinesNumberedHidden
\begin{algorithm}[!ht]
\SetAlgoNlRelativeSize{0}
\caption{\bf \bf A globally convergent ES for unrelaxable constraints ($\Omega= \Omega_{\nr}$)}
\label{alg:ESgc}
\SetAlgoLined
\KwData{choose positive integers $\lambda$ and $\mu$ such that $\lambda \geq \mu$.
Select an initial $x_0 \in \Omega_{\nr}$ and evaluate $f(x_0)$.
Choose initial step lengths $\sigma_0,\sigma_0^{\es} > 0$ and initial
weights $(\omega_0^1,\ldots,\omega_0^{\mu}) \in S$. Choose constants $\beta_1,\beta_2,d_{\min},d_{\max}$ such
that $0 < \beta_1 \leq \beta_2 < 1$ and $0 < d_{\min} < d_{\max}$. Select a forcing function $\rho(\cdot)$.}
\For{$k= 0,  1, \ldots$}{
\textbf{Step 1:} compute new sample points $Y_{k+1} = \{ y_{k+1}^1,\ldots,y_{k+1}^{\lambda} \}$
such that
\begin{equation*} \label{mean}
y_{k+1}^i \; = \; x_k + \sigma_k \tilde{d}_k^i, \; i=1,\ldots,\lambda,
\end{equation*}
where the directions $\tilde{d}_k^i$'s are computed from the original ES directions $d_k^i$'s
(which in turn are drawn from a chosen ES distribution $\mathcal{C}_k$ and scaled if necessary to satisfy
 $d_{\min} \leq \|d_k^i \| \leq d_{\max}$)\;
 
\textbf{Step 2:} evaluate $f_{\Omega_{\nr}}(y_{k+1}^i)$, $i=1,\ldots,\lambda$, and reorder the
offspring points in $Y_{k+1} = \{ \tilde{y}_{k+1}^1,\ldots,\tilde{y}_{k+1}^{\lambda} \}$
by increasing order: $f_{\Omega_{\nr}}(\tilde{y}_{k+1}^1) \leq \cdots \leq f_{\Omega_{\nr}}(\tilde{y}_{k+1}^{\lambda})$.

Select the new parents as the best $\mu$ offspring sample points
$\{ \tilde{y}_{k+1}^1,\ldots,\tilde{y}_{k+1}^{\mu} \}$, and compute their weighted mean
\[
x_{k+1}^{\trial} \; = \; \sum_{i=1}^{\mu} \omega_k^i \tilde{y}_{k+1}^i.
\]
Evaluate $f_{\Omega_{\nr}}(x_{k+1}^{\trial})$\;
\textbf{Step 3:} \eIf{$f_{\Omega_{\nr}}(x_{k+1}^{\trial}) \leq f(x_k) - \rho(\sigma_k)$}{consider the iteration successful, set $x_{k+1} = x_{k+1}^{\trial}$, and
$\sigma_{k+1} \geq \sigma_k$ (for example $\sigma_{k+1} = \max \{ \sigma_k, \sigma_k^{\es} \}$).\; }{consider the iteration unsuccessful, set $x_{k+1} = x_k$ and
$\sigma_{k+1} = \bar{\beta}_k\sigma_k$, with $\bar{\beta}_k \in (\beta_1,\beta_2)$\;}

\textbf{Step 4:} update the ES step length $\sigma_{k+1}^{\es}$, the distribution $\mathcal{C}_{k+1}$, and the weights
$(\omega_{k+1}^1,$ $\ldots,\omega_{k+1}^{\mu}) \in S$\;
}
\end{algorithm}

We note that, due to the local geometry of the unrelaxable constraints around the current point $x_k$, the directions ~$\tilde{d}_k^i$ used to compute the offspring are not necessarily randomly generated following a pure ES paradigm. In fact, two approaches were proposed in \cite{YDiouane_SGratton_LNVicente_2015_b}, the first one was based on the use of the extreme barrier function and also the inclusion of positive generators of the tangent cone of the constraints. In this case, whenever the current iterate $x_k$ is getting close to the boundary of the feasible region, the set of directions $\{\tilde{d}_k^i \}$ will include (in addition to the ES randomly generated directions $d_k^i$) positive generators
of the tangent cone. The second proposed approach for generating the set of directions  $\{\tilde{d}_k^i \}$ is based on projecting onto the feasible  domain all the generated sampled points $x_k + \sigma_k d_k^i$,
and then taking instead $\Phi_{\Omega_{\nr}} (x_k + \sigma_k d_k^i)$ where $\Phi_{\Omega_{\nr}}$ is a given projection operator on the constraints set $\Omega_{\nr}$. 
This procedure is the same as considering
$ \tilde{d}_k^i \; = \; \frac{\Phi_{\Omega_{\nr}} (x_k + \sigma_k d_k^i) - x_k}{\sigma_k} $
in the framework of Algorithm~\ref{alg:ESgc}. For typical choices of the projection $\Phi_{\Omega_{\nr}}$, one can use
the $\ell_2$-projection in the case of bound constraints (as it is trivial to evaluate), in the case of linear constraints  one may  use
the $\ell_1$-projection (as it reduces to the solution of an LP problem). For both approaches, we note that Steps~2 and 3 of Algorithm~\ref{alg:ESgc} make use of the extreme barrier function~(\ref{extreme}).

Due to the sufficient decrease condition, one can guarantee
that a subsequence of step sizes will converge to zero. From this property
and the fact that the step size is significantly reduced (at least by $\beta_2$) in
unsuccessful iterations, one proves
that there exists a subsequence~$K$ of unsuccessful iterates driving the step size to zero \cite[Lemma 2.1]{YDiouane_SGratton_LNVicente_2015_b}. 
The global convergence is then achieved by establishing that some type of
directional derivatives are nonnegative at limit points of refining
subsequences along certain limit directions (see \cite[Theorem 2.1]{YDiouane_SGratton_LNVicente_2015_b}).
\section{A globally convergent ES for general constraints}
\label{sec:merit:algo}

The challenge of this paper consists in changing Algorithm~\ref{alg:ESgc}, in a minimal way,
to a globally convergent framework that takes into account both relaxable constraints and unrelaxable constraints. For this, we define a merit function as follows: 
\begin{equation} \label{merit:func}
M(x) \; = \; \left\{\begin{array}{ll}f(x) + \bar{\delta}  g(x)& \textrm{if } x \in \Omega_{\nr},\\ +\infty & \textrm{otherwise.}
\end{array}\right.
\end{equation}
where $ \bar{\delta} > 0$ is a given positive constant and $g$ defines a constraint violation function with respect to relaxable constraints. The $\ell_1$-norm is commonly used to define the constraint violation function, i.e.,  $g(x) =  \sum_{i=1}^{r} \max(c_i(x),0)$. Other choices for $g$ exist, for instance, using the $\ell_2$-norm i.e., $g(x) = \sum^r_{i=1} \max(c_i(x),0)^2$.
The merit function will be used to evaluate a trial step and hence decide whether such step will be accepted or not.
The extension of the globally convergent ES (given by Algorithm~\ref{alg:ESgc}) to a general constrained setting can be seen as a combination of two approaches, a feasible one
where either the extreme barrier or a projection operator will be used to handle the unrelaxable constraints, and a merit function approach (possibly with a restoration procedure) to handle relaxable constraints. 

The description of the proposed framework is as follows. For a given iteration $k$, a trial mean parent~$x_{k+1}^{\trial}$ is computed as the weighted mean of the $\mu$ best points in terms 
of the merit function value. The current trial mean parent will be considered as a ``\textit{\textbf{Successful point}}'' if one of the two following situations occur. 
The first scenario happens when one is sufficiently away from the feasible region (i.e., $g(x_k) > C \rho(\sigma_k)$ for some constant $C > 1$) and $x_{k+1}^{\trial}$ 
sufficiently decreases the constraint violation function $g$ is observed (i.e., $g_{\Omega_{\nr}}(x_{k+1}^{\trial}) < g(x_k) - \rho(\sigma_k)$, where $g_{\Omega_{\nr}}$ denotes the extreme barrier function associated to $g$ with respect to $\Omega_{\nr}$). The second situation occurs when the merit function is sufficiently decreased (i.e., $M(x_{k+1}^{\trial})  <  M(x_k) - \rho(\sigma_k)$). 

%
%

Before checking whether the trial point is successful or not, the algorithm will try first to restore 
the feasibility or at least decrease the constraints violation if needed. The restoration process will be activated if the current mean parent $x_k$ is far away from the feasible region and the trial point $x_{k+1}^{\trial}$ sufficiently decreases the constraint violation function $g$ but not the merit function. 
More specifically, a ``\textit{\textbf{Restoration identifier}}'' will be activated if one has 
\[
  g_{\Omega_{\nr}}(x_{k+1}^{\trial})  \; < \; g(x_k) - \rho(\sigma_k) ~~~\mbox{and} ~~~~ g(x_k) > C \rho(\sigma_k)
\]
and
\[
 M(x_{k+1}^{\trial}) \; \ge \; M(x_k).
\]

%

The restoration algorithm will be left as far as progress on the reduction of the constraint violation can not be achieved all without any considerable increase in $f$. The complete description of the restoration procedure is given in Algorithm \ref{alg:M-ES-Resotoration}. 

As a result, the main iteration of the proposed merit function approach can be devided to two steps: restoration and minimization. In the restoration step the
aim is to decrease infeasibility (by minimizing essentially the function $g_{\Omega_{\nr}}$) while in the minimization step the objective function $f$ is improved over a relaxed set of constraints by using the merit function $M$. The final obtained approach  is described is given in Algorithm \ref{alg:M-ES-Main}. 

 \LinesNumberedHidden
\begin{algorithm}[!ht]
\SetAlgoNlRelativeSize{0}
\caption{\bf \bf A globally convergent ES for general constraints (Main)}
\label{alg:M-ES-Main}
\SetAlgoLined
\KwData{choose positive integers $\lambda$ and $\mu$ such that $\lambda \geq \mu$.
Select an initial $x_0 \in \Omega_{\nr}$ and evaluate $f(x_0)$.
Choose initial step lengths $\sigma_0,\sigma_0^{\es} > 0$ and initial
weights $(\omega_0^1,\ldots,\omega_0^{\mu}) \in S$. Choose constants $\beta_1,\beta_2,d_{\min},d_{\max}$ such
that $0 < \beta_1 \leq \beta_2 < 1$ and $0 < d_{\min} < d_{\max}$. Select a forcing function $\rho(\cdot)$.}
\For{$k= 0,  1, \ldots$}{
\textbf{Step 1:} compute new sample points $Y_{k+1} = \{ y_{k+1}^1,\ldots,y_{k+1}^{\lambda} \}$
such that
\begin{equation*} \label{mean}
y_{k+1}^i \; = \; x_k + \sigma_k \tilde{d}_k^i, \; i=1,\ldots,\lambda,
\end{equation*}
where the directions $\tilde{d}_k^i$'s are computed from the original ES directions $d_k^i$'s
(which in turn are drawn from a chosen ES distribution $\mathcal{C}_k$ and scaled if necessary to satisfy
 $d_{\min} \leq \|d_k^i \| \leq d_{\max}$).\;
 
\textbf{Step 2:} evaluate $M(y_{k+1}^i)$, $i=1,\ldots,\lambda$, and reorder the
offspring points in $Y_{k+1} = \{ \tilde{y}_{k+1}^1,\ldots,\tilde{y}_{k+1}^{\lambda} \}$
by increasing order: $M(\tilde{y}_{k+1}^1) \leq \cdots \leq M(\tilde{y}_{k+1}^{\lambda})$.

Select the new parents as the best $\mu$ offspring sample points
$\{ \tilde{y}_{k+1}^1,\ldots,\tilde{y}_{k+1}^{\mu} \}$, and compute their weighted mean
\[
x_{k+1}^{\trial} \; = \; \sum_{i=1}^{\mu} \omega_k^i \tilde{y}_{k+1}^i;
\]
\textbf{Step 3:} \eIf{$x_{k+1}^{\trial} \notin \Omega_{\nr} $}{ the iteration is declared unsuccessful\; }{ \eIf{$x_{k+1}^{\trial}$ is a ``\textbf{Restoration identifier}'' }{  enter Restoration (with $k_r =k $)\; }{ \eIf{$x_{k+1}^{\trial}$ is a ``\textbf{Successful point}''}{ declare the iteration successful, set $x_{k+1} = x_{k+1}^{\trial}$, and
$\sigma_{k+1} \geq \sigma_k$ (for example $\sigma_{k+1} = \max \{ \sigma_k, \sigma_k^{\es} \}$)\; }{ the iteration is declared unsuccessful\;}}}
\If{the iteration is declared unsuccessful}{ set $x_{k+1} = x_k$ and
$\sigma_{k+1} = \beta_k \sigma_k$, with $\beta_k \in (\beta_1,\beta_2)$\; }

\textbf{Step 4:} update the ES step length $\sigma_{k+1}^{\es}$, the distribution $\mathcal{C}_{k+1}$, and the weights
$(\omega_{k+1}^1,$ $\ldots,\omega_{k+1}^{\mu}) \in S$\;
}
\end{algorithm}


 \LinesNumberedHidden
\begin{algorithm}[!ht]
\SetAlgoNlRelativeSize{0}
\caption{\bf \bf A globally convergent ES for general constraints (Restoration)}
\label{alg:M-ES-Resotoration}
\SetAlgoLined
\KwData{Start from $x_{k_r} \in \Omega_{\nr}$ given from the Main algorithm and consider the same parameter as in there.}
\For{$k=k_r, k_r +1 , k_r +2, \ldots$}{
\textbf{Step 1:} compute new sample points $Y_{k+1} = \{ y_{k+1}^1,\ldots,y_{k+1}^{\lambda} \}$
such that
\begin{equation*} \label{mean}
y_{k+1}^i \; = \; x_k + \sigma_k \tilde{d}_k^i, \; i=1,\ldots,\lambda,
\end{equation*}
where the directions $\tilde{d}_k^i$'s are computed from the original ES directions $d_k^i$'s
(which in turn are drawn from a chosen ES distribution $\mathcal{C}_k$ and scaled if necessary to satisfy
 $d_{\min} \leq \|d_k^i \| \leq d_{\max}$)\;
 
\textbf{Step 2:}  evaluate $g_{\Omega_{\nr}}(y_{k+1}^i)$, $i=1,\ldots,\lambda$, and reorder the
offspring points in $Y_{k+1} = \{ \tilde{y}_{k+1}^1,\ldots,\tilde{y}_{k+1}^{\lambda} \}$
by increasing order: $g_{\Omega_{\nr}}(\tilde{y}_{k+1}^1) \leq \cdots \leq g_{\Omega_{\nr}}(\tilde{y}_{k+1}^{\lambda})$.

Select the new parents as the best $\mu$ offspring sample points
$\{ \tilde{y}_{k+1}^1,\ldots,\tilde{y}_{k+1}^{\mu} \}$, and compute their weighted mean
\[
x_{k+1}^{\trial} \; = \; \sum_{i=1}^{\mu} \omega_k^i \tilde{y}_{k+1}^i;
\]
\textbf{Step 3:} \eIf{$x_{k+1}^{\trial} \notin \Omega_{\nr} $}{ the iteration is declared unsuccessful\; }{ \eIf{ $g(x_{k+1}^{\trial}) \; < \; g(x_k) - \rho(\sigma_k) ~~~\mbox{and} ~~~~ g(x_k) > C \rho(\sigma_k)$ }{  the iteration is declared successful, set $x_{k+1} = x_{k+1}^{\trial}$, and
$\sigma_{k+1} \geq \sigma_k$ (for example $\sigma_{k+1} = \max \{ \sigma_k, \sigma_k^{\es} \}$)\; }{ the iteration is declared unsuccessful\;}}
\If{the iteration is declared unsuccessful}{ \eIf{$  M(x_{k+1}^{\trial}) \; < \; M(x_k) $}{leave Restoration and return to the Main algorithm (starting at a new $(k+1)$-th 
iteration using $x_{k+1}$ and $\sigma_{k+1}$)\;}{ set $x_{k+1} = x_k$ and
$\sigma_{k+1} = \beta_k \sigma_k$, with $\beta_k \in (\beta_1,\beta_2)$\; } }

\textbf{Step 4:} update the ES step length $\sigma_{k+1}^{\es}$, the distribution $\mathcal{C}_{k+1}$, and the weights
$(\omega_{k+1}^1,$ $\ldots,\omega_{k+1}^{\mu}) \in S$\;
}
\end{algorithm}

For both algorithms (main and restoration), our global convergence analysis will be done independently of the choice of the distribution $\mathcal{C}_k$, the weights $(\omega_{k}^1,$ $\ldots,\omega_{k}^{\mu}) \in S$, and the step size $\sigma_{k}^{\es}$ . Therefore and similarly to Algorithm \ref{alg:ESgc}, the update of the ES parameters is left unspecified. Note that one also imposes bounds on all directions $d_k^i $ used by the algorithm. This modification is, however, very
mild since the lower bound $d_{\min}$ can be chosen very close to zero and the upper bound $d_{\max}$ set to a very large number.  The construction of the set of directions $\{\tilde{d}_k^i \}$ can be done with respect to the local geometry of the unrelaxable constraints as proposed in \cite{YDiouane_SGratton_LNVicente_2015_b}.

\section{Global convergence}
\label{sec:global_convergence}
The convergence results presented in this section are in the vein of those first established for the merit function approach for direct search methods~\cite{SGratton_LNVicente_2014}. For the convergence analysis, we will consider a sequence of iterations generated by Algorithm~\ref{alg:M-ES-Main} without any stopping criterion. The analysis is organized depending on the number of times where restoration is
entered. To keep the presentation of the paper simpler, only the case where restoration is entered for finitely times will be treated in our convergence analysis of this section. For completeness, the analysis of the two other cases (namely when (a) an infinite run of consecutive steps inside Restoration or (b) one enters the restoration an infinite number of times) is given in Appendix \ref{sec:Appendix}. The analysis of both cases shows that such behaviors would lead to feasibility and optimality results similar to the case where the restoration is entered finitely times.



When the restoration is entered for finitely times, one can guarantee
that a subsequence of the step sizes $\{\sigma_k\}$ will converge to zero. In fact, due to the sufficient decrease condition imposed on the merit function along the iterates (or in the constraints
violation function if the iterates are sufficiently away from the feasible region) and the control on the step size (reduced at least by $\beta_2$ for
unsuccessful iterations), one can ensure the existence of a subsequence~$K$ of unsuccessful iterates driving the step size to zero.

\begin{lemma} \label{liminf}
Let $f$ be bounded below and assuming that the restoration is not entered after a certain order.
Then, \[
    \liminf_{k \rightarrow +\infty} \sigma_k = 0.   
      \]
\end{lemma}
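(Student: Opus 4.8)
The plan is to argue by contradiction. Suppose $\liminf_{k\to\infty}\sigma_k>0$; then there are a constant $\sigma_{*}>0$ and an index $k_0$, which we may also take to lie past the last iteration at which Restoration is entered (so that every iteration $k\ge k_0$ is a pure iteration of Algorithm~\ref{alg:M-ES-Main}), such that $\sigma_k\ge\sigma_{*}$, and hence $\rho(\sigma_k)\ge\rho(\sigma_{*})>0$, for all $k\ge k_0$. Since $x_0\in\Omega_{\nr}$, an unsuccessful iteration leaves the iterate unchanged, and every accepted trial point passes an explicit feasibility test in Step~3, all iterates lie in $\Omega_{\nr}$; therefore $M(x_k)=f(x_k)+\bar{\delta}g(x_k)$ is finite for every $k$, and since $f$ is bounded below and $g\ge 0$ we have $M(x_k)\ge\inf f>-\infty$.

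First I would note that there must be infinitely many successful iterations with index $\ge k_0$: otherwise all iterations past some index are unsuccessful, so $\sigma_{k+1}=\beta_k\sigma_k\le\beta_2\sigma_k$ there and $\sigma_k\to 0$, contradicting $\sigma_k\ge\sigma_{*}$. I then classify each successful iteration $k\ge k_0$: call it of \emph{type (b)} if the merit clause $M(x_{k+1}^{\trial})<M(x_k)-\rho(\sigma_k)$ holds, and otherwise of \emph{type (a)} --- in which case, being successful, it must satisfy the infeasibility clause $g(x_k)>C\rho(\sigma_k)$ and $g_{\Omega_{\nr}}(x_{k+1}^{\trial})<g(x_k)-\rho(\sigma_k)$. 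The facts to record are: on an unsuccessful iteration $x_{k+1}=x_k$, so $f$, $g$ and $M$ are unchanged; on a type-(b) iteration $M(x_{k+1})<M(x_k)-\rho(\sigma_k)$; on a type-(a) iteration $g(x_{k+1})<g(x_k)-\rho(\sigma_k)$ (using $x_{k+1}^{\trial}\in\Omega_{\nr}$), and moreover $M(x_{k+1})<M(x_k)$ --- the latter because, for $k\ge k_0$, the ``Restoration identifier'' test is not triggered, so, the infeasibility-clause inequalities being in force, one must have $M(x_{k+1}^{\trial})<M(x_k)$. In particular $\{M(x_k)\}_{k\ge k_0}$ is nonincreasing; being bounded below it converges, so $M(x_k)-M(x_{k+1})\to 0$.

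The contradiction is then reached in two steps. Were there infinitely many type-(b) iterations, each would decrease $M$ by at least $\rho(\sigma_{*})>0$, which is incompatible with $M(x_k)-M(x_{k+1})\to 0$; hence there is $k_1\ge k_0$ past which no iteration is of type (b). Beyond $k_1$ the sequence $\{g(x_k)\}$ is then nonincreasing (unchanged on unsuccessful iterations, strictly decreased on type-(a) ones), yet by the previous paragraph there remain infinitely many successful --- hence type-(a) --- iterations beyond $k_1$, each decreasing $g$ by at least $\rho(\sigma_{*})>0$; this forces $g(x_k)\to-\infty$, contradicting $g\ge 0$. Therefore $\liminf_{k\to\infty}\sigma_k=0$.

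The only genuine subtlety, which is what distinguishes this from the purely unrelaxable situation of \cite[Lemma~2.1]{YDiouane_SGratton_LNVicente_2015_b}, is that sufficient decrease is imposed on \emph{two} different functions according to the iteration type, so a priori neither $M$ nor $g$ is monotone. The crux is therefore the observation that a type-(a) iteration still cannot increase $M$ (exactly the content of the Restoration identifier not firing), which makes $\{M(x_k)\}$ monotone, after which peeling off the finitely many type-(b) iterations makes $\{g(x_k)\}$ monotone on a tail and the usual telescoping argument closes the proof.
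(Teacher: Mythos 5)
Your proof is correct and follows essentially the same route as the paper's: argue by contradiction from $\sigma_k\ge\sigma_*$, note that a tail without successes forces $\sigma_k\to 0$, and then split the infinitely many successful iterations according to whether the constraint-violation clause or the merit clause fired, contradicting boundedness below of $g$ and of $M$ respectively. Your write-up is in fact tighter on one point the paper leaves implicit: you justify that a $g$-decreasing success cannot increase $M$ (because otherwise the Restoration identifier would fire, contradicting that Restoration is no longer entered), which is what makes $\{M(x_k)\}$ monotone and the telescoping argument on $M$ legitimate.
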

\begin{proof}
Suppose that there exists a $\bar{k} > 0 $  and $\sigma > 0$ such that $\sigma_k > \sigma$ 
and $k \ge \bar{k}$ is a given iteration of Algorithm~\ref{alg:M-ES-Main}. If there is an infinite sequence $J_1$ of successful iterations after $\bar{k}$, 
this leads to a contradiction with the fact that $g$ and $f$ are bounded below.

In fact, since $\rho$ is a nondecreasing positive function, one has
$\rho(\sigma_k) \geq \rho(\sigma) > 0$. Hence,
if $g(x_{k+1}) < g(x_k) - \rho( \sigma_k )$ and $g(x_{k}) > C \rho(\sigma_k)$ for all~$k \in J_1$, then
\[
 g(x_{k+1}) < g(x_k) - \rho(\sigma),
\]
which obviously contradicts the boundness below of $g$ by $0$. 
Thus there must exists an infinite subsequence $J_2 \subseteq J_1 $ of iterates for which 
$M(x_{k+1}) \; < \; M(x_k) - \rho(\sigma_k)$. Hence,
\[
 M(x_{k+1}) \; < \; M(x_k) - \rho(\sigma)  \;  ~~~~\mbox{for all}~k \in J_2.
\]
Thus $ M(x_k)$ tends to $- \infty$ which is a contradiction, since both $f$ and $g$ are bounded below.

The proof is thus completed if there is an infinite number of 
successful iterations. However, if no more successful iterations 
occur after a certain order, then this also leads to a contradiction. 
The conclusion is that one must have a subsequence of iterations 
driving $\sigma_k$ to zero.
\end{proof}

\begin{theorem} \label{liminf2}
Let $f$ be bounded below and assuming that the restoration is not entered after a certain order.

There exists a subsequence $K$ of unsuccessful iterates 
for which $\lim_{k\in K} \sigma_k = 0$. Moreover, if the sequence $\{x_k\}$ is bounded, there 
exists an $x_*$ and a refining subsequence $K'$ such that $\lim_{k\in K} x_k = x_*$.
\end{theorem}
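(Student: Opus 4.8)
The plan is to build on Lemma~\ref{liminf}, which already gives $\liminf_{k\to+\infty}\sigma_k=0$. First I would upgrade this to the existence of a subsequence of \emph{unsuccessful} iterates driving $\sigma_k$ to zero. The key observation is the update rule for $\sigma_k$: at a successful iteration $\sigma_{k+1}\geq\sigma_k$, while at an unsuccessful iteration $\sigma_{k+1}=\beta_k\sigma_k$ with $\beta_k\in(\beta_1,\beta_2)\subset(0,1)$, so the step size can only decrease on unsuccessful iterations, and then strictly by a factor bounded above by $\beta_2<1$. Since $\liminf_k\sigma_k=0$, for any threshold the step size must dip below it infinitely often; because each such descent can only be produced at an unsuccessful iteration (successful iterations never shrink $\sigma_k$), one can extract an infinite set $K$ of unsuccessful iterations with $\sigma_k\to 0$ along $K$. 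The cleanest way to phrase this: pick a decreasing sequence $\varepsilon_j\to 0$; for each $j$ there is a first index $k_j$ past $k_{j-1}$ with $\sigma_{k_j+1}\le\varepsilon_j$, and since $\sigma$ only drops at unsuccessful steps, $k_j$ itself (or the last unsuccessful index at which the relevant drop occurred) is unsuccessful with $\sigma_{k_j}\le\varepsilon_j/\beta_1$; this yields the subsequence $K$. I expect this argument to be essentially the same as \cite[Lemma 2.1]{YDiouane_SGratton_LNVicente_2015_b} referenced in the excerpt, so I would invoke that reasoning rather than redo it in full detail.

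For the second assertion, suppose $\{x_k\}$ is bounded. Then the subsequence $\{x_k\}_{k\in K}$ is bounded and, by Bolzano--Weierstrass, has a convergent sub-subsequence; I would rename this $K'$ (a ``refining subsequence'' in the terminology of direct-search analysis, i.e.\ a subsequence of unsuccessful iterates along which $\sigma_k\to 0$ and $x_k$ converges) and call its limit $x_*$. A minor point worth a sentence: since $x_k\in\Omega_{\nr}$ for all $k$ by construction of Algorithm~\ref{alg:M-ES-Main} (the initial point is feasible for the unrelaxable constraints and every accepted trial point, whether from the minimization or restoration step, lies in $\Omega_{\nr}$), and $\Omega_{\nr}$ is closed (bounds or linear constraints), the limit $x_*$ also belongs to $\Omega_{\nr}$; this is not strictly asserted in the theorem but is the natural setting for the subsequent stationarity analysis.

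I do not anticipate a genuine obstacle here — the theorem is a structural consequence of Lemma~\ref{liminf} and the step-size update mechanism, entirely parallel to the unrelaxable-constraints case. The only thing requiring a little care is the bookkeeping in extracting the unsuccessful subsequence: one must rule out the degenerate possibility that $\sigma_k\to 0$ is achieved ``in the limit'' through infinitely many successful iterations with $\sigma_{k+1}\geq\sigma_k$ — but this is impossible precisely because successful iterations are non-decreasing in $\sigma$, so any actual decrease toward zero is witnessed by unsuccessful iterations, and there must be infinitely many of them contributing arbitrarily small values. Once $K$ is in hand, the boundedness hypothesis delivers $K'$ and $x_*$ immediately by compactness, with no further work.
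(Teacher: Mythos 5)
Your argument is correct and follows essentially the same route as the paper: invoke Lemma~\ref{liminf}, observe that the step size can only shrink at unsuccessful iterations so the decrease to zero is witnessed by an infinite unsuccessful subsequence (with $\sigma_{k}=\sigma_{k+1}/\beta_k\le\sigma_{k+1}/\beta_1$ controlling the predecessor step size, exactly as in the paper), and then apply Bolzano--Weierstrass under the boundedness hypothesis to extract $K'$ and $x_*$. The added remark that $x_*\in\Omega_{\nr}$ by closedness is a harmless (and sensible) extra not present in the paper's proof.
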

\begin{proof}
From Lemma~\ref{liminf}, there must exist an infinite 
subsequence $K$ of unsuccessful iterates for which 
$\sigma_{k+1}$ goes to zero. In a such case we have 
$\sigma_{k} = (1/\beta_k) \sigma_{k+1} $, $\beta_k \in (\beta_1,\beta_2)$, 
and $\beta_1>0$, and thus $\sigma_k \rightarrow 0$, for $k \in K$, too.

The second part of the theorem is proved by 
extracting a convergent subsequence $K' \subset K$
 for which $x_k$ converges to $x_*$.
\end{proof}

The global convergence will be achieved by establishing that some type of
directional derivatives are nonnegative at limit points of refining
subsequences along certain limit directions (known as refining directions). By refining subsequence~\cite{CAudet_JEDennis_2006}, we mean a subsequence
of unsuccessful iterates in the Main algorithm (see Algorithm~\ref{alg:M-ES-Main}) for which the step-size parameter converges to zero. 

When $h$ is Lipschitz continuous near $x_* \in \Omega_{\nr}$, one can make use of the Clarke-Jahn generalized derivative
along a direction~$d$
\[
h^\circ(x_*;d) \; = \;
\limsup_{\begin{array}{c}x \rightarrow x_*, x\in \Omega_{\nr}\\t\downarrow0,x+td\in\Omega_{\nr}\end{array}}\frac{h(x+td)-h(x)}{t}.
\]
(Such a derivative is essentially the Clarke generalized directional derivative~\cite{FHClarke_1990},
adapted by Jahn~\cite{JJahn_1996} to the presence of constraints).
However, for the proper definition of $h^\circ(x_*;d)$, one needs to guarantee that
$x+td\in\Omega_{\nr}$ for $x \in \Omega_{\nr}$ arbitrarily close to~$x_*$ which is
assured if $d$ is hypertangent to $\Omega_{\nr}$ at $x_*$. In the following,
$B(x;\epsilon)$ is the closed ball formed by all points which dist no more than~$\epsilon$ to~$x$.

\begin{definition}
A vector $d\in \real^n$ is said to be a hypertangent vector to the set $\Omega_{\nr} \subseteq \real^n$
at the point $x$ in $\Omega_{\nr}$ if there exists a scalar $\epsilon>0$ such that
\begin{equation*}
y+tw\in\Omega_{\nr},\quad\forall y\in\Omega_{\nr}\cap B(x;\epsilon),\quad w\in B(d;\epsilon),\quad\text{and}\quad 0<t<\epsilon.
\end{equation*}
\end{definition}

The hypertangent cone to $ \Omega_{\nr}$ at $x$, denoted by~$T_{\Omega_{\nr}}^{\h}(x)$, is the set of all hypertangent vectors
to $\Omega_{\nr}$ at $x$.
Then, the Clarke tangent cone to $\Omega_{\nr}$ at $x$ (denoted by~$T_{\Omega_{\nr}}^{\cl}(x)$)
can be defined as the closure of the hypertangent cone~$T_{\Omega_{\nr}}^{\h}(x)$.
The Clarke tangent cone generalizes the notion of tangent cone in Nonlinear Programming~\cite{Nocedal_Wright_2006},
and the original definition $d \in T_{\Omega_{\nr}}^{\cl}(x)$ is given below.

\begin{definition}
A vector $d\in \real^n$ is said to be a Clarke tangent vector to the set $\Omega_{\nr} \subseteq \real^n$
at the point $x$ in the closure of $\Omega_{\nr}$ if for every sequence $\{y_k\}$ of elements of
$\Omega_{\nr}$ that converges to $x$ and for every sequence of positive real numbers $\{t_k\}$
converging to zero, there exists a sequence of vectors $\{w_k\}$ converging to $d$ such
that $y_k+t_kw_k\in\Omega_{\nr}$.
\end{definition}

Given a direction~$v$ in the tangent cone, possibly not in the hypertangent one, one can consider the
Clarke-Jahn generalized derivative to $\Omega_{\nr}$ at $x_*$ as the limit
\[
h^\circ(x_*;v) \; = \; \lim_{d \in T_{\Omega_{\nr}}^{\h}(x_*), d \rightarrow v} h^\circ(x_*;d)
\]
(see~\cite{CAudet_JEDennis_2006}).
A point~$x_* \in \Omega_{\nr}$ is considered Clarke stationary
if $h^\circ(x_*;d) \geq 0$, $\forall d \in T_{\Omega_{\nr}}^{\cl}(x_*)$.


It remains now to define the notion
of refining direction~\cite{CAudet_JEDennis_2006},
associated with a convergent refining subsequence~$K$,
as a limit point of $\{ a_k/\|a_k\| \}$ for all $k \in K$ sufficiently large such
that $x_k + \sigma_k a_k \in \Omega_{\nr}$, where, in the particular case of taking
the weighted mean as the object of evaluation, one has $a_k = \sum_{i=1}^{\mu} \omega_k^i \tilde{d}_k^i$.
The following convergence result is concerning the determination of feasibility.
\begin{theorem} \label{th:1}
Let $a_k = \sum_{i=1}^{\mu} \omega_k^i d_k^i$ and assume that $f$ is bounded below. Suppose that the restoration is not entered after a certain order. 
Let $x_* \in \Omega_{\nr}$ be the limit point of a convergent subsequence of unsuccessful iterates $\{ x_k \}_K$ for which $\lim_{k \in K} \sigma_k = 0$.
Assume that~$g$ is Lipschitz continuous near~$x_*$ with constant $\nu_g > 0$.

If $d \in T_{\Omega_{\nr}}^{\h}(x_*)$ is a refining direction associated with $\{ a_k/\|a_k\| \}_K$, then either $g(x_*) = 0$ 
or  $g^\circ(x_*;d) \geq 0$.
\end{theorem}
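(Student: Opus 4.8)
The plan is to argue by a dichotomy on the value $g(x_*)$: if $g(x_*)=0$ there is nothing to prove, so assume henceforth that $g(x_*)>0$ and show $g^\circ(x_*;d)\ge 0$. The first step is to extract, from the fact that the iterates in $K$ are unsuccessful, a lower bound on $g$ at the trial points. Fix a large $k\in K$. Since $d$ is a refining direction associated with $\{a_k/\|a_k\|\}_K$, we may restrict to those $k\in K$ with $x_{k+1}^{\trial}=x_k+\sigma_k a_k\in\Omega_{\nr}$, so $g_{\Omega_{\nr}}(x_{k+1}^{\trial})=g(x_{k+1}^{\trial})$; then, reading Step~3 of Algorithm~\ref{alg:M-ES-Main}, an unsuccessful iteration whose trial point lies in $\Omega_{\nr}$ forces $x_{k+1}^{\trial}$ to be neither a \emph{Restoration identifier} nor a \emph{Successful point} (consistent with restoration not being entered after a certain order). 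Now $g$ is continuous near $x_*$ and $x_k\to x_*$ with $g(x_*)>0$, while $\sigma_k\to 0$ makes $\rho(\sigma_k)\to 0$; hence $g(x_k)>C\rho(\sigma_k)$ for all large $k\in K$. Since the first \emph{Successful point} scenario fails although its first condition already holds, its second condition must fail, i.e.
\[
g(x_k+\sigma_k a_k)\;=\;g_{\Omega_{\nr}}(x_{k+1}^{\trial})\;\ge\;g(x_k)-\rho(\sigma_k),
\]
so that $g(x_k+\sigma_k a_k)-g(x_k)\ge-\rho(\sigma_k)$ for all large $k\in K$.

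Next I would normalize and pass to a convergent sub-subsequence $K'\subseteq K$ with $u_k:=a_k/\|a_k\|\to d$. Put $t_k:=\sigma_k\|a_k\|$; since $\|a_k\|\le d_{\max}$ we have $t_k\downarrow 0$, and $x_k+t_k u_k=x_{k+1}^{\trial}\in\Omega_{\nr}$. Dividing the above inequality by $t_k$ gives
\[
\frac{g(x_k+t_k u_k)-g(x_k)}{t_k}\;\ge\;-\frac{\rho(\sigma_k)}{\sigma_k}\cdot\frac{1}{\|a_k\|},
\]
whose right-hand side tends to $0$ because $\rho$ is a forcing function ($\rho(\sigma_k)/\sigma_k\to 0$) and $\|a_k\|$ stays bounded away from $0$ along $K'$. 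Hence $\liminf_{k\in K'}\big(g(x_k+t_k u_k)-g(x_k)\big)/t_k\ge 0$.

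The last step replaces the varying directions $u_k$ by the fixed direction $d$, which is exactly where the hypertangent assumption is used. Let $\epsilon>0$ be as in the definition of $d\in T_{\Omega_{\nr}}^{\h}(x_*)$; for $k\in K'$ large we have $x_k\in\Omega_{\nr}\cap B(x_*;\epsilon)$, $u_k\in B(d;\epsilon)$ and $0<t_k<\epsilon$, which in particular forces $x_k+t_k d\in\Omega_{\nr}$. Writing
\[
\frac{g(x_k+t_k u_k)-g(x_k)}{t_k}\;=\;\frac{g(x_k+t_k d)-g(x_k)}{t_k}\;+\;\frac{g(x_k+t_k u_k)-g(x_k+t_k d)}{t_k},
\]
the last term is bounded in modulus by $\nu_g\|u_k-d\|\to 0$ by Lipschitz continuity of $g$ near $x_*$, so $\liminf_{k\in K'}\big(g(x_k+t_k d)-g(x_k)\big)/t_k\ge 0$. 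Since $\{x_k\}_{K'}\subset\Omega_{\nr}$ converges to $x_*$, $t_k\downarrow 0$, and $x_k+t_k d\in\Omega_{\nr}$, these sequences are admissible in the $\limsup$ defining the Clarke--Jahn derivative, whence
\[
g^\circ(x_*;d)\;\ge\;\limsup_{k\in K'}\frac{g(x_k+t_k d)-g(x_k)}{t_k}\;\ge\;\liminf_{k\in K'}\frac{g(x_k+t_k d)-g(x_k)}{t_k}\;\ge\;0,
\]
which is the claim.

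The main obstacle is not the algebra but the bookkeeping in the first step: one must correctly decode the (nested) branching of Step~3 of Algorithm~\ref{alg:M-ES-Main} to see that an unsuccessful iterate with a feasible trial point yields precisely $g_{\Omega_{\nr}}(x_{k+1}^{\trial})\ge g(x_k)-\rho(\sigma_k)$, and this in turn requires first certifying $g(x_k)>C\rho(\sigma_k)$ along the subsequence (using $g(x_*)>0$, continuity of $g$, and $\rho(\sigma_k)\to 0$). The secondary delicate point is the passage $u_k\rightsquigarrow d$: the hypertangent property of $d$ is exactly what keeps the comparison points $x_k+t_k d$ feasible so that the Clarke--Jahn $\limsup$ applies, and the vanishing of $\rho(\sigma_k)/(\sigma_k\|a_k\|)$ relies on $\|a_k\|$ being bounded below along $K'$.
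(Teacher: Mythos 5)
Your argument follows the paper's own proof almost step for step: the same unsuccessful-iteration inequality $g(x_k+\sigma_k a_k)\ge g(x_k)-\rho(\sigma_k)$ extracted from Step~3, the same Lipschitz swap of $a_k/\|a_k\|$ for $d$ with error $\nu_g\|a_k/\|a_k\|-d\|$, and the same use of hypertangency to keep the comparison points in $\Omega_{\nr}$ so that the Clarke--Jahn $\limsup$ applies. Your only structural deviation is to run the dichotomy on the value $g(x_*)$ up front (if $g(x_*)>0$ then $g(x_k)>C\rho(\sigma_k)$ eventually), whereas the paper splits $K'$ into the subsequence where $g(x_k)\le C\rho(\sigma_k)$ (yielding $g(x_*)=0$) and its complement; this is equivalent bookkeeping.

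There is, however, one genuine gap: your disposal of the term $\rho(\sigma_k)/(\sigma_k\|a_k\|)$ rests on the claim that $\|a_k\|$ stays bounded away from zero along $K'$, and nothing in the algorithm guarantees this. The safeguards only give $d_{\min}\le\|d_k^i\|\le d_{\max}$ for the individual directions; $a_k=\sum_{i=1}^{\mu}\omega_k^i d_k^i$ is a convex combination and can have arbitrarily small norm through cancellation, so $\rho(\sigma_k)/\sigma_k\to 0$ alone does not make $\rho(\sigma_k)/(\sigma_k\|a_k\|)$ vanish. Without that vanishing your second step only yields $\liminf_{k\in K'}\big(g(x_k+t_ku_k)-g(x_k)\big)/t_k\ge-\limsup_{k\in K'}\rho(\sigma_k)/(\sigma_k\|a_k\|)$, which can be negative (even $-\infty$), and the final lower bound $g^\circ(x_*;d)\ge 0$ does not follow. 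This is exactly why the next theorem in the paper, and the source results in Gratton--Vicente and Diouane et al., impose $\rho(\sigma_k)/(\sigma_k\|a_k\|)\to 0$ as an explicit hypothesis on the directions and weights. To be fair, the paper's printed proof of this very theorem also drops that term without comment, so your argument is no weaker than the published one; but to make your write-up airtight you should either invoke such a hypothesis (or establish a lower bound on $\|a_k\|$ for the particular recombination used) rather than assert it.
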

\begin{proof}
Let $d$ be a limit point of $\{ a_k/\|a_k\| \}_K$. Then it must exist 
a subsequence $K'$ of $K$ such that
$a_k / \| a_k \| \rightarrow d$ on $K'$. 
On the other hand, we have for all $k$ that
\[
x_{k+1}^{\trial} \; = \; \sum_{i=1}^{\mu} \omega_k^i \tilde{y}_{k+1}^i \; = \; x_k + \sigma_k \sum_{i=1}^{\mu} \omega_k^i d_k^i
 \; = \; x_k + \sigma_k a_k,
\]
Since the iteration $k \in K'$ is unsuccessful, $g(x_{k+1}^{\trial}) \geq g(x_k) - \rho( \sigma_k )$ or $g(x_{k}) \leq C \rho(\sigma_k)$, 
and then either there exists an infinite number of the first inequality or the second one as follows:
\begin{enumerate}
\item  For the case where there exists a subsequence $K_1 \subseteq K'$ such that $g(x_{k}) \leq C \rho(\sigma_k)$, it is 
trivial to obtain  $g(x_*) = 0$ using both the continuity of g and the fact that $\sigma_k$ tends to zero in $K_1$.
\item For the case where there exists a subsequence $K_2 \subseteq K'$ such that the 
sequence $\{ a_k/\|a_k\| \}_{K_2}$ converges to $d \in T_{\Omega_{\nr}}^{\h}(x_*) $ 
in $K_2$ and the sequence $\{ \| a_k \| \sigma_k \}_{k \in K_2}$ goes to zero in $K_2$ 
($a_k$ is bounded above for all~$k$, and so $\sigma_k \|a_k\|$ tends to zero when $\sigma_k$ does). 
Thus one must have necessarily for $k$ sufficiently large in $K_2$, $x_k + \sigma_k a_k \in {\Omega_{\nr}}$ such that
\[
 g(x_k + \sigma_k a_k ) \geq g(x_k) - \rho( \sigma_k ).
\]
From the definition of the Clarke-Jahn generalized derivative along directions $d \in T_{\Omega_{\nr}}^{\h}(x_*)$,
\begin{eqnarray*}
g^{\circ}(x_*;d)&=&\limsup_{x\rightarrow x_*, t \downarrow 0, x+ t d \in {\Omega_{\nr}}} \frac{g(x+t d)-g(x)}{t}\\
 & \geq  & \limsup_{k\in K_2} \frac{g(x_k+\sigma_k \| a_k\| d)-g(x_k)}{\sigma_k \| a_k\|}\\
&\geq&\limsup_{k\in K_2}\frac{g(x_k+\sigma_k \|a_k\| ( a_k / \| a_k \| ) )-g(x_k)}{\sigma_k \| a_k\|} - g_k,
\end{eqnarray*}
where, 
\begin{eqnarray*}
g_k &  = & \frac{g(x_k + \sigma_k a_k)-g(x_k+\sigma_k \| a_k\| d)}{\sigma_k \| a_k\|} \\
\end{eqnarray*}
from the Lipschitz continuity of~$g$ near $x_*$
\begin{eqnarray*}
g_k &  = & \frac{g(x_k + \sigma_k a_k)-g(x_k+\sigma_k \| a_k\| d)}{\sigma_k \| a_k\|}  \\ 
    & \leq & \nu_g \left\| \dfrac{a_k}{\|a_k\|} - d \right\|   \\
\end{eqnarray*}
tends to zero on $K_2$. Finally,
\begin{eqnarray*}
g^{\circ}(x_*;d)& \geq &\limsup_{k\in K_2}\frac{g(x_k+\sigma_k a_k)-g(x_k)+\rho(\sigma_k)}{\sigma_k \|a_k\|}-
\frac{\rho(\sigma_k)}{\sigma_k \|a_k\|} - g_k\\
&=&\limsup_{k\in K_2}\frac{g(x_k+\sigma_k a_k) - g(x_k)+\rho(\sigma_k)}{\sigma_k \|a_k\|}.
\end{eqnarray*}
One then obtains $g^{\circ}(x_*;d) \geq 0$.
\end{enumerate}
\end{proof}

Moreover, assuming that the set of the refining directions $d \in T_{\Omega_{\nr}}^{\h}(x_*)$, associated with $\{ a_k/\|a_k\| \}_K$, is dense in the unite sphere. One can show that the limit point $x_*$ is Clarke stationary for the flowing optimization problem, known as the constraint violation problem:
\begin{eqnarray}
\label{const_viola}
 \min & g(x)   \\
  s.t. &  x \in \Omega_{\nr}. \nonumber
\end{eqnarray}
\begin{theorem} \label{th:2}
Let $a_k = \sum_{i=1}^{\mu} \omega_k^i d_k^i$ and assume that $f$ is bounded below. Suppose that the restoration is not entered after a certain order.  Assume that the directions~$\tilde{d}_k^i$'s
and the weights $\omega_k^i$'s
are such that (i) $\sigma_k \|a_k\|$ tends to zero when $\sigma_k$ does, and (ii)
$\rho(\sigma_k)/(\sigma_k \| a_k\| )$ also tends to zero.

Let $x_* \in \Omega_{\nr}$ be the limit point of a convergent subsequence of
unsuccessful iterates $\{ x_k \}_K$ for which $\lim_{k \in K} \sigma_k = 0$
and that~$T_\Omega^{\cl}(x_*) \neq \emptyset$. Assume that~$g$ is Lipschitz continuous near~$x_*$ with constant $\nu > 0$

Then either (a) $g(x_*) = 0$ (implying $x_* \in \Omega_{\re}$ and thus $x_* \in \Omega $) or (b) if the set of refining directions  $d \in T_{\Omega_{\nr}}^{\cl}(x_*)$ associated with $\{ a_k/\|a_k\| \}_{K'}$ (where $K'$ is a subsequence of K for which $g(x_k + \sigma_k a_k) \ge g(x_k) - \rho(\sigma_k)$) is dense in $T_{\Omega_{\nr}}^{\cl}(x_*) \cap \{d \in \real^n: \|d\|=1\}$, then $g^\circ(x_*;v) \geq 0$ for all $v\in T_{\Omega_{\nr}}^{\cl}(x_*) $ and  $x_*$ is a Clarke stationary point of the constraint violation problem (\ref{const_viola}).
\end{theorem}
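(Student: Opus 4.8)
The plan is to combine Theorem~\ref{th:1} with the density hypothesis, turning a directional inequality valid along the concrete refining directions into Clarke stationarity over the whole tangent cone. First I would dispose of alternative~(a): if $g(x_*)=0$, then $c_i(x_*)\le 0$ for every $i$, so $x_*\in\Omega_{\re}$, and together with $x_*\in\Omega_{\nr}$ this gives $x_*\in\Omega$. So from now on assume $g(x_*)>0$ and aim at~(b). I would then observe that, since restoration is not entered after a certain order, for $k\in K$ large the trial point $x_{k+1}^{\trial}=x_k+\sigma_k a_k$ is not a restoration identifier; moreover, since $\sigma_k\to 0$ along $K$, the continuity of $g$ at $x_*$ together with $\rho(\sigma_k)\to 0$ rules out $g(x_k)\le C\rho(\sigma_k)$ holding along an infinite subsequence of $K$ (it would force $g(x_*)=0$). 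Hence, for $k\in K$ large with $x_k+\sigma_k a_k\in\Omega_{\nr}$ we have $g(x_k)>C\rho(\sigma_k)$, and the iteration being unsuccessful, hence $x_{k+1}^{\trial}$ not a successful point, then forces $g(x_k+\sigma_k a_k)\ge g(x_k)-\rho(\sigma_k)$; such indices lie in the subsequence $K'$ of the statement, and the refining directions to be handled are exactly the limit points of $\{a_k/\|a_k\|\}_{K'}$.

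The heart of the argument is to show that $g^\circ(x_*;d)\ge 0$ for every refining direction $d\in T_{\Omega_{\nr}}^{\cl}(x_*)$ with $\|d\|=1$ associated with $\{a_k/\|a_k\|\}_{K'}$. When $d$ is hypertangent this is exactly Theorem~\ref{th:1} (its second alternative, since $g(x_*)\neq 0$). For a general such $d$, say $a_k/\|a_k\|\to d$ along $K''\subseteq K'$, I would reproduce the estimate of Theorem~\ref{th:1}: pick any $e\in T_{\Omega_{\nr}}^{\h}(x_*)$; hypertangency together with $x_k\to x_*$ and $\sigma_k\|a_k\|\to 0$ (hypothesis~(i)) gives $x_k+\sigma_k\|a_k\|e\in\Omega_{\nr}$ for $k\in K''$ large, so $g^\circ(x_*;e)$ dominates $\limsup_{k\in K''}\bigl(g(x_k+\sigma_k\|a_k\|e)-g(x_k)\bigr)/(\sigma_k\|a_k\|)$. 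The Lipschitz continuity of $g$ with constant $\nu$ lets me replace $e$ by $a_k/\|a_k\|$ inside at the cost of $\nu\|a_k/\|a_k\|-e\|\to\nu\|d-e\|$, while the inequality $g(x_k+\sigma_k a_k)\ge g(x_k)-\rho(\sigma_k)$ (valid on $K'$) and hypothesis~(ii), $\rho(\sigma_k)/(\sigma_k\|a_k\|)\to 0$, make the remaining quotient nonnegative in the limit. This yields $g^\circ(x_*;e)\ge -\nu\|d-e\|$, and letting $e\to d$ within $T_{\Omega_{\nr}}^{\h}(x_*)$ gives $g^\circ(x_*;d)\ge 0$ by the definition of $g^\circ(x_*;\cdot)$ on the Clarke tangent cone.

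Finally, the map $v\mapsto g^\circ(x_*;v)$ is continuous on $T_{\Omega_{\nr}}^{\cl}(x_*)$, being the standard extension of the Clarke-Jahn derivative from the hypertangent to the Clarke tangent cone (cf.~\cite{CAudet_JEDennis_2006}). Since, by the previous step, $g^\circ(x_*;\cdot)\ge 0$ on the set of refining directions, which is assumed dense in $T_{\Omega_{\nr}}^{\cl}(x_*)\cap\{d\in\real^n:\|d\|=1\}$, continuity gives $g^\circ(x_*;v)\ge 0$ for every unit $v\in T_{\Omega_{\nr}}^{\cl}(x_*)$, and positive homogeneity extends this to all of $T_{\Omega_{\nr}}^{\cl}(x_*)$; by definition $x_*$ is then a Clarke stationary point of the constraint violation problem~(\ref{const_viola}), which is alternative~(b). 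The main obstacle I expect is this last transfer: Theorem~\ref{th:1} and the direct $\limsup$ estimate only control $g^\circ(x_*;\cdot)$ along the concrete directions extracted from $\{a_k/\|a_k\|\}$, so turning that into stationarity over all of $T_{\Omega_{\nr}}^{\cl}(x_*)$ relies on both the density hypothesis and the non-elementary continuity of the Clarke-Jahn directional derivative; one must also carry conditions~(i)--(ii) correctly through the $\limsup$, keep track of which subsequence ($K$, $K'$, $K''$) each inequality is asserted on, and first peel off the feasibility branch $g(x_k)\le C\rho(\sigma_k)$ that lands in~(a).
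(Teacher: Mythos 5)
Your proposal is correct and follows essentially the same route as the paper, which simply defers to the proof of \cite[Theorem 4.2]{SGratton_LNVicente_2014}: you reproduce the Theorem~\ref{th:1}-type $\limsup$ estimate along the subsequence $K'$ (using the unsuccessful-iteration inequality, hypothesis (ii), and the Lipschitz constant $\nu$), and then pass from refining directions to the whole Clarke tangent cone via the density assumption and the extension/continuity of the Clarke--Jahn derivative. Your explicit treatment of non-hypertangent refining directions (the bound $g^\circ(x_*;e)\ge -\nu\|d-e\|$ for hypertangent $e$, then $e\to d$) is exactly the detail the cited argument relies on, so there is no gap.
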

\begin{proof}
See the proof of \cite[Theorem 4.2]{SGratton_LNVicente_2014}.
%
\end{proof}

We now move to an intermediate optimality result.  As in \cite{SGratton_LNVicente_2014}, we will not use $x_* \in \Omega_{\re} $ 
explicitly in the proof but only  $g^\circ(x_*;d) \leq 0$. The latter inequality describes the cone of 
first order linearized directions under feasibility assumption $x_* \in \Omega_{\re}$.
\begin{theorem} \label{th:2}
Let $a_k = \sum_{i=1}^{\mu} \omega_k^i d_k^i$ and assume that $f$ is bounded below.  Suppose that the restoration is not entered after a certain order.  

Let $x_* \in \Omega_{\nr}$ be the limit point of a convergent subsequence of unsuccessful iterates $\{ x_k \}_K$ for which $\lim_{k \in K} \sigma_k = 0$.
Assume that~$g$ and $f$ are Lipschitz continuous near~$x_*$.

If $d \in T_{\Omega_{\nr}}^{\h}(x_*)$ is a refining direction associated with $\{ a_k/\|a_k\| \}_K$ such that $g^\circ(x_*;d) \leq 0$. Then  $f^\circ(x_*;d) \geq 0$.

\end{theorem}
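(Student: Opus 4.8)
The plan is to follow the pattern of the proof of Theorem~\ref{th:1} and of the corresponding optimality result in~\cite{SGratton_LNVicente_2014}, the only genuinely new ingredient being that the failure of the merit-function success test at an unsuccessful iteration provides a one-sided control of the increment of $f$ in terms of that of $g$.

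First, I would extract a subsequence $K'\subseteq K$ along which $a_k/\|a_k\|\to d$; this exists because $d$ is a refining direction associated with $\{a_k/\|a_k\|\}_K$. Recall that $x_{k+1}^{\trial}=x_k+\sigma_k a_k$ and that $x_k\in\Omega_{\nr}$ for every $k$. For $k\in K'$ large enough, the hypertangency of $d$ to $\Omega_{\nr}$ at $x_*$, together with $x_k\to x_*$ and $\sigma_k\|a_k\|\to 0$, ensures that both $x_k+\sigma_k\|a_k\|d$ and $x_{k+1}^{\trial}$ lie in $\Omega_{\nr}$ (the latter on applying the hypertangency property with $w=a_k/\|a_k\|$ and $t=\sigma_k\|a_k\|$). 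Since restoration is not entered after a certain order, for such $k$ the iteration being unsuccessful while $x_{k+1}^{\trial}\in\Omega_{\nr}$ means precisely that $x_{k+1}^{\trial}$ is neither a ``Restoration identifier'' nor a ``Successful point''; in particular the second success condition $M(x_{k+1}^{\trial})<M(x_k)-\rho(\sigma_k)$ does not hold, that is, $M(x_{k+1}^{\trial})\ge M(x_k)-\rho(\sigma_k)$. As $x_k$ and $x_{k+1}^{\trial}$ are both feasible for $\Omega_{\nr}$, the merit function equals $f+\bar\delta g$ at these two points, and rearranging the last inequality gives, for all large $k\in K'$,
\[
 f(x_k+\sigma_k a_k)-f(x_k)\;\ge\;-\bar\delta\,\bigl(g(x_k+\sigma_k a_k)-g(x_k)\bigr)-\rho(\sigma_k).
\]

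Next, I would feed this estimate into the Clarke-Jahn generalized derivative of $f$ at $x_*$ along the hypertangent direction $d$. Exactly as in the earlier proofs, using $x_k\to x_*$, $t_k:=\sigma_k\|a_k\|\downarrow 0$, $x_k+t_k d\in\Omega_{\nr}$, and replacing the increment along $d$ by the increment along $a_k/\|a_k\|$ at the price of an error bounded by $\nu_f\,\|a_k/\|a_k\|-d\|\to 0$ (Lipschitz continuity of $f$ with constant $\nu_f$), I obtain
\[
 f^{\circ}(x_*;d)\;\ge\;\limsup_{k\in K'}\frac{f(x_k+\sigma_k a_k)-f(x_k)}{\sigma_k\|a_k\|}\;\ge\;\limsup_{k\in K'}\Bigl(-\bar\delta\,\frac{g(x_k+\sigma_k a_k)-g(x_k)}{\sigma_k\|a_k\|}-\frac{\rho(\sigma_k)}{\sigma_k\|a_k\|}\Bigr).
\]
Under the same conditions on the directions and weights invoked in Theorem~\ref{th:1}, namely $\sigma_k\|a_k\|\to 0$ and $\rho(\sigma_k)/(\sigma_k\|a_k\|)\to 0$, the forcing term drops out. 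Passing to a further subsequence $K''\subseteq K'$ on which the quotient $\bigl(g(x_k+\sigma_k a_k)-g(x_k)\bigr)/(\sigma_k\|a_k\|)$ converges --- it stays in $[-\nu_g,\nu_g]$ by Lipschitz continuity of $g$ with constant $\nu_g$ --- to some $L$, I deduce $f^{\circ}(x_*;d)\ge-\bar\delta L$.

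Finally, I would show that $L\le 0$. The same Lipschitz-error correction applied to $g$ gives $L=\lim_{k\in K''}\bigl(g(x_k+\sigma_k\|a_k\|d)-g(x_k)\bigr)/(\sigma_k\|a_k\|)$, and this limit is bounded above by $g^{\circ}(x_*;d)$ directly from the definition of the Clarke-Jahn derivative along the hypertangent direction $d$ (taken with $x=x_k$ and $t=\sigma_k\|a_k\|$). Together with the hypothesis $g^{\circ}(x_*;d)\le 0$ this yields $L\le g^{\circ}(x_*;d)\le 0$, hence $-\bar\delta L\ge 0$ and therefore $f^{\circ}(x_*;d)\ge 0$, which is the claim. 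The steps that require the most care are the ones shared with the proof of Theorem~\ref{th:1}: keeping the nested subsequence extractions coherent, and, above all, using hypertangency to certify that every point occurring in the difference quotients belongs to $\Omega_{\nr}$, so that the Clarke-Jahn derivatives are legitimately defined; the genuinely new step --- reading off $M(x_{k+1}^{\trial})\ge M(x_k)-\rho(\sigma_k)$ from the branching logic of an unsuccessful iteration --- is immediate once one observes that the restoration mechanism is inactive for all large $k$.
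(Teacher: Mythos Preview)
Your proposal is correct and follows essentially the same route as the paper's proof: extract a subsequence along which $a_k/\|a_k\|\to d$, use unsuccessfulness to get $M(x_{k+1}^{\trial})\ge M(x_k)-\rho(\sigma_k)$, rewrite this as a lower bound on the $f$-increment in terms of the $g$-increment, insert into the Clarke--Jahn difference quotient with a Lipschitz correction, and close using $g^\circ(x_*;d)\le 0$. The only cosmetic difference is that you pass to a further subsequence $K''$ on which the $g$-quotient converges to some limit $L$, whereas the paper works directly with $\limsup$ inequalities; your version is arguably cleaner and sidesteps the mild $\limsup$/$\liminf$ bookkeeping the paper leaves implicit.
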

\begin{proof}
By assumption there exists a subsequence $K' \subseteq K$ such that the 
sequence $\{a_k/ \| a_k \| \}_{K'}$ converges to $d \in T_{\Omega_{\nr}}^{\h}(x_*) $ 
in $K'$ and the sequence $\{ \| a_k \| \sigma_k \}_{K'}$ goes to zero in $K'$, 
Thus one must have necessarily for $k$ sufficiently large in $K'$, $ x_{k+1}^{\trial} = x_k + \sigma_k a_k \in {\Omega_{\nr}}$. 

Since the iteration $k \in K'$ is unsuccessful, one has $M(x_{k+1}^{\trial}) \; \geq \; M(x_k) - \rho(\sigma_k)$, and thus
\begin{eqnarray}
\label{equ:2}
\frac{f(x_k + \sigma_k a_k)- f(x_k)}{ \| a_k \| \sigma_k} &  \geq &  - \bar \delta\frac{g(x_k + \sigma_k a_k)- g(x_k)}{ \| a_k \| \sigma_k}  - \frac{\rho(\sigma_k)}{\sigma_k \| a_k\|} 
\end{eqnarray}
On the other hand,
\begin{eqnarray*}
f^{\circ}(x_*;d)&=&\limsup_{x\rightarrow x_*, t \downarrow 0, x+ t d \in \Omega} \frac{f(x+t d)-f(x)}{t}\\
 & \geq  & \limsup_{k\in K'} \frac{f(x_k+\sigma_k \| a_k\| d)-f(x_k)}{\sigma_k \| a_k\|}\\
&\geq&\limsup_{k\in K'}\frac{f(x_k+\sigma_k \|a_k\| ( a_k / \| a_k \| ) )-f(x_k)}{\sigma_k \| a_k\|} - f_k,
\end{eqnarray*}
where, 
\begin{eqnarray*}
f_k &  = & \frac{f(x_k + \sigma_k a_k)-f(x_k+\sigma_k \| a_k\| d)}{\sigma_k \| a_k\|}, \\
\end{eqnarray*}
which then implies from (\ref{equ:2})
\begin{eqnarray*}
f^{\circ}(x_*;d) &\geq&\limsup_{k\in K'}\frac{f(x_k+\sigma_k \|a_k\| ( a_k / \| a_k \| ) )-f(x_k)}{\sigma_k \| a_k\|} - f_k,\\
                 &\geq&\limsup_{k\in K'} - \bar \delta\frac{g(x_k + \sigma_k a_k)- g(x_k)}{ \| a_k \| \sigma_k}  - \frac{\rho(\sigma_k)}{\sigma_k \| a_k\|} - f_k\\
                 &\geq&\limsup_{k\in K'} - \bar \delta \frac{g(x_k+\sigma_k \|a_k\| d)-g(x_k)} {\sigma_k \| a_k\|} +  \bar \delta g_k - \frac{\rho(\sigma_k)}{\sigma_k \| a_k\|} - f_k,
\end{eqnarray*}
where 
\begin{eqnarray*}
g_k &  = & \frac{g(x_k + \sigma_k a_k)-g(x_k+\sigma_k \| a_k\| d)}{\sigma_k \| a_k\|}. \\
\end{eqnarray*}

From the assumption $g^{\circ}(x_*;d) \leq 0 $, one has 
\[
\limsup_{k\in K'} \frac{g(x_k+\sigma_k \|a_k\| d)-g(x_k)} {\sigma_k \| a_k\|} \leq \limsup_{x\rightarrow x_*, t \downarrow 0, x+ t d \in {\Omega_{\nr}}} \frac{g(x+t d)-g(x)}{t} \leq 0,
\]

one obtains then 
\begin{eqnarray}
 \label{equ:3}
    f^{\circ}(x_*;d)   &\geq& \limsup_{k\in K'}   \bar \delta g_k - \frac{\rho(\sigma_k)}{\sigma_k \| a_k\|} - f_k.
\end{eqnarray}

The Lipschitz continuity of both $g$ and $f$ near $x_*$ guaranties that the quantities $f_k$ and $g_k$ tend to zero in $K'$. Thus, the proof is completed since the right-hand-side of (\ref{equ:3}) tends to zero in $K'$.
\end{proof}
\begin{theorem} \label{th:3}
Assuming that $f$ is bounded below and that Restoration is not entered after a certain order.

Let $x_* \in \Omega_{\nr}$ be the limit point of a convergent subsequence of unsuccessful iterates $\{ x_k \}_{k \in K}$ for which $\lim_{k \in K} \sigma_k = 0$.
Assume that~$g$ and $f$ are Lipschitz continuous near~$x_*$.

Assume that the set 
\begin{eqnarray}
\label{T_x}
 T(x_*)= T^{\h}_{\Omega_{\nr}}(x_*) \cap \{ d \in \real^n : \| d \| =1,   g^{\circ}(x_*,d) \leq 0 \}
\end{eqnarray}
has a non-empty interior.

Let the set of refining directions be dense in $T(x_*)$. Then $f^\circ(x_*,v) \geq 0$ for 
all $v \in T^{\cl}_{\Omega_{\nr}}(x_*)$ such that $g^\circ(x_*,v) \leq 0$, and $x_*$ is a Clarke stationary point of the problem (\ref{c:Prob1}).
\end{theorem}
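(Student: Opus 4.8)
The plan is to derive Theorem~\ref{th:3} from the intermediate optimality result obtained just above --- that $f^{\circ}(x_*;d)\ge 0$ for every refining direction $d\in T_{\Omega_{\nr}}^{\h}(x_*)$ with $g^{\circ}(x_*;d)\le 0$ --- by a density argument, and then to pass from the hypertangent cone to the full Clarke tangent cone by a convexification step. I would rely on two standard facts about the Clarke--Jahn generalized derivative (see~\cite{CAudet_JEDennis_2006,JJahn_1996}): (i) for a function Lipschitz near $x_*$, the map $d\mapsto f^{\circ}(x_*;d)$ (and likewise $d\mapsto g^{\circ}(x_*;d)$) is sublinear and Lipschitz continuous on $T_{\Omega_{\nr}}^{\h}(x_*)$ and extends continuously to $T_{\Omega_{\nr}}^{\cl}(x_*)$ via the limit $h^{\circ}(x_*;v)=\lim_{d\in T_{\Omega_{\nr}}^{\h}(x_*),\,d\to v}h^{\circ}(x_*;d)$ recalled in Section~\ref{sec:global_convergence}; and (ii) $T_{\Omega_{\nr}}^{\cl}(x_*)$ is a convex cone whose interior, when nonempty, equals $T_{\Omega_{\nr}}^{\h}(x_*)$, so that a strict convex combination of a Clarke tangent vector with an interior (hypertangent) vector is again hypertangent.

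First I would apply the intermediate optimality theorem to the present setting: every refining direction $d$ associated with $\{a_k/\|a_k\|\}_K$ that lies in $T(x_*)$ --- i.e. $d\in T_{\Omega_{\nr}}^{\h}(x_*)$, $\|d\|=1$, $g^{\circ}(x_*;d)\le 0$ --- already satisfies $f^{\circ}(x_*;d)\ge 0$. Since by hypothesis these refining directions are dense in $T(x_*)$ and $d\mapsto f^{\circ}(x_*;d)$ is continuous on $T_{\Omega_{\nr}}^{\h}(x_*)$, passing to the limit gives $f^{\circ}(x_*;d)\ge 0$ for every $d\in T(x_*)$.

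Next comes the transfer to $T_{\Omega_{\nr}}^{\cl}(x_*)$. Fix $d_0$ in the interior of $T(x_*)$, nonempty by assumption. For $v\in T_{\Omega_{\nr}}^{\cl}(x_*)$ with $v\neq 0$ and $g^{\circ}(x_*;v)\le 0$, set $v_t=(1-t)v+t\,d_0$, $t\in(0,1]$. By (ii) one has $v_t\in T_{\Omega_{\nr}}^{\h}(x_*)$, and by sublinearity $g^{\circ}(x_*;v_t)\le (1-t)g^{\circ}(x_*;v)+t\,g^{\circ}(x_*;d_0)\le 0$; hence $v_t/\|v_t\|\in T(x_*)$ and, by the previous paragraph, $f^{\circ}(x_*;v_t)=\|v_t\|\,f^{\circ}(x_*;v_t/\|v_t\|)\ge 0$. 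Since $v\neq 0$, $\|v_t\|$ stays bounded away from zero as $t\downarrow 0$; letting $t\downarrow 0$ and using the continuity of $f^{\circ}(x_*;\cdot)$ on $T_{\Omega_{\nr}}^{\cl}(x_*)$ yields $f^{\circ}(x_*;v)\ge 0$ (the case $v=0$ being trivial). This establishes $f^{\circ}(x_*;v)\ge 0$ for all $v\in T_{\Omega_{\nr}}^{\cl}(x_*)$ with $g^{\circ}(x_*;v)\le 0$. As $x_*\in\Omega_{\nr}$ (the iterates remain in the closed set $\Omega_{\nr}$) and, as noted earlier in this section, $\{v: g^{\circ}(x_*;v)\le 0\}$ is the cone of first-order linearized feasible directions of $\Omega_{\re}$ at $x_*$, this is precisely Clarke stationarity of $x_*$ for~(\ref{c:Prob1}); in addition, whenever the interior of $T(x_*)$ contains a direction with $g^{\circ}(x_*;\cdot)<0$, density together with upper semicontinuity of $g^{\circ}(x_*;\cdot)$ provides such a refining direction, and Theorem~\ref{th:1} then forces $g(x_*)=0$, so $x_*\in\Omega_{\re}\cap\Omega_{\nr}=\Omega$ is feasible.

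I expect the transfer step to be the main obstacle. The density hypothesis only controls directions inside $T(x_*)\subset T_{\Omega_{\nr}}^{\h}(x_*)$, so reaching all of $T_{\Omega_{\nr}}^{\cl}(x_*)\cap\{v: g^{\circ}(x_*;v)\le 0\}$ genuinely uses the nonempty-interior assumption, the convexity of the Clarke tangent cone (to keep $v_t$ hypertangent) and the sublinearity of the Clarke--Jahn derivative in its direction argument (to keep $g^{\circ}(x_*;v_t)\le 0$). The delicate point is to make sure these homogeneity, subadditivity and continuity properties really hold for the constrained (Jahn-modified) generalized derivative; once they are in place the remaining computation is routine and parallels the proof of the corresponding result for direct search in~\cite{SGratton_LNVicente_2014}.
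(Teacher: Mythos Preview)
Your proposal is correct and follows essentially the same route as the paper, whose proof simply refers to \cite[Theorem 4.4]{SGratton_LNVicente_2014}: you apply the intermediate optimality result on refining directions in $T(x_*)$, extend to all of $T(x_*)$ by density and continuity of $d\mapsto f^{\circ}(x_*;d)$, and then reach the full Clarke tangent cone via the standard convex-combination argument using the nonempty-interior assumption and sublinearity of the Clarke--Jahn derivative. Your closing remark on feasibility via Theorem~\ref{th:1} is a welcome addition but not part of the stated conclusion.
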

\begin{proof}
See the proof of \cite[Theorem 4.4]{SGratton_LNVicente_2014}.
\end{proof}
\section{Numerical experiments}
\label{sec:numerical_illustration}

To quantify the efficiency of the proposed merit approach, we compare 
our solver with the direct search method MADS where the progressive barrier approach has been 
implemented~\cite{Audet_Dennis_2009} to handle both relaxable and unrelaxable constraints. The progressive barrier approach, proposed in MADS, enjoys similar convergence properties as for our algorithm, hence, a comparison between the two solvers is very meaningful. For MADS solver, we used the implementation given in the NOMAD package~\cite{nomad1,nomad2,nomad3}, version~3.6.1 (C++ version linked to Matlab via a mex interface),
where we enabled the option {\tt DISABLE MODELS}, meaning that no modeling is used in MADS. 
The models are disabled since our solvers are not using any modeling to speed up the convergence. 

The parameter choices of Algorithm~\ref{alg:M-ES-Main} and Algorithm~\ref{alg:M-ES-Resotoration} followed those
in \cite{YDiouane_SGratton_LNVicente_2015_b}.
The values of $\lambda$ and $\mu$ and of the initial weights
are those of CMA-ES for unconstrained optimization (see~\cite{NHansen_2011}):
$\lambda = 4+\mbox{floor}(3\log(n))$,
$\mu = \mbox{floor}(\lambda/2)$,
where $\mbox{floor}(\cdot)$ rounds to the nearest integer,
and
$\omega_0^i = a_i/(a_1+\cdots + a_\mu)$,
$a_i = \log(\lambda/2+1/2)-\log(i)$, $i=1,\ldots,\mu$.
The choices of the distribution~$\mathcal{C}_k$ and of the update
of $\sigma_k^{\es}$ also followed CMA-ES
for unconstrained optimization (see~\cite{NHansen_2011}).
The forcing function selected
was $\rho(\sigma)=10^{-4} \sigma^2$. To reduce the step length in
unsuccessful iterations we used $\sigma_{k+1} = 0.9 \sigma_k$
which corresponds to setting $\beta_1 = \beta_2 = 0.9$.
In successful iterations we set $\sigma_{k+1} = \max \{ \sigma_k, \sigma_k^{\cmaes} \}$
(with~$\sigma_k^{\cmaes}$ the CMA step size used in ES).
The directions $d_k^i$, $i=1,\ldots,\lambda$, were
scaled if necessary to obey the safeguards $d_{\min} \leq \|d_k^i \| \leq d_{\max}$,
with $d_{\min}= 10^{-10}$ and $d_{\max}= 10^{10}$. For the update of the penalty parameter we picked $\bar \delta= \max\{10,g(x_0) \}$ and $C=100$. The measure for constraint violation was using the $\ell_1$-norm penalty.

 The initial step size is estimated
using only the bound constraints:
If there is a pair of finite lower and upper bounds for a variable, then
$\sigma_0$ is set to the half of the minimum of such distances, otherwise $\sigma_0=1$.

In our test results, we consider that all the constraints are 
relaxable except the bounds. In this case, the merit function (MF) and the 
progressive approaches (PB) are respectively enabled, the related solvers 
will be called ES-MF and MADS-PB respectively. 

\subsection{Results on known test problems}
Our test set is the one used in~\cite{Hedar_2004,Hock_Schittkowski_1981,Koziel_1999,Michalewicz_1996}
and comprises~13 well-known test problems {\tt G1}--{\tt G13} (see Table~\ref{table:test:problems}).
These test problems, coded in Matlab,  exhibit a diversity of features and the kind of difficulties that appear in
constrained global optimization. In addition to such problems, we added three other
engineering optimization problems~\cite{Hedar_2004,Coello_Montes_2002}: {\tt PVD} the pressure vessel design problem, {\tt TCS} the tension-compression string problem, and {\tt WBD} the welded beam
design problem.
Problems {\tt G2}, {\tt G3},
and {\tt G8} are maximization problems and were converted to minimization.
Problems {\tt G3}, {\tt G5}, {\tt G11}, and {\tt WBD} contain equality constraints.
When a constraint is of the form $c^e_i(x) = 0$, we use the following relaxed inequality
constraint instead $c_i(x) = |c^e_i(x)| -10^{-4} \le  0$. 

The starting point $x_0$ is chosen to be the same for all solvers and set to $(LB + UB)/2$ when 
the bounds $LB$ and $UB$ are given, otherwise it is generated randomly in the search space.
Two  different maximal budgets are considered for our experiments; firstly, we use a small one (i.e., $1000$ objective function evaluations) to analyse the performance of the algorithms during the early stages of the optimization; secondly, a large maximal budget (i.e., $20000$ objective function evaluations) is used to allow the analysis of the asymptotic behavior of the tested solvers.
We note that MADS-PB is a deterministic solver while ES-MF is stochastic, thus different runs of ES-MF will be used. We describe our finding based upon average results over 10 runs (as different runs of ES-MF yielded close results).

\begin{table}[h]
\centering
\begin{tabular}{lcccccccccccccccc}
\hline
        Name   &{\tt G1}&{\tt G2}&{\tt G3}&{\tt G4}&{\tt G5}&{\tt G6}&{\tt G7}&{\tt G8}&{\tt G9}&{\tt G10}&{\tt G11} &{\tt G12}&{\tt G13}&{\tt PVD}&{\tt TCS}&{\tt WBD}\\
        $n$     & 13 & 20 & 20 & 5 & 4 & 2 & 10 & 2 & 7   & 8   &  2  & 3 & 5 & 4 & 3 & 4 \\
        $m$     & 9 & 2 & 1 & 6 & 5 & 2 & 8 & 2 & 4   & 6   &  1 & 1 & 3& 3 & 4 & 6  \\ \hline
\end{tabular}
\caption{Some of the features of the non-linear constrained optimization problems:
the dimension~$n$ and the number of the constraints $m$ (in addition to the bounds). \label{table:test:problems}}
\end{table}



Tables~\ref{table:merit:1000} and \ref{table:merit:20000}, report results for 
both ES-MF and MADS-PB using a maximal budget of $1000$ and $20000$, respectively. For each problem,
we display the optimal objective value found by the solver $f(x_*)$, the associated constrained 
violation $g(x_*)$, and the number of objective function evaluations $\#f$ needed to reach $x_*$. When a solver 
returns a flag error or encounters an internal problem, we display '$-$' instead of the values of $f(x_*)$ and $g(x_*)$.  
At the solution $x_*$, one requires at least  tolerance of $10^{-5}$ on the constraints violation (i.e. $g(x_*)< 10^{-5}$) to consider $x_*$ feasible with respect to relaxable constraints. 

\begin{table}[!ht] 
\centering 
\begin{tabular}{|c|c|ccc|ccc|} 
\hline 
Name  & Best known &  \multicolumn{3}{|c|}{ES-MF} & \multicolumn{3}{|c|}{MADS-PB}\\ 
\cline{3-8} 
    &        $f_{opt}$  & $f(x_*)$ & $\#f$  & $g(x_*)$  & $f(x_*)$ & $\#f$  & $g(x_*)$ \\ 
\hline 
\hline 
 \multicolumn{8}{|c|}{Starting feasible} \\
\hline 
\hline 
{\tt G1 }   &  $-15$ & $-12.4895$ & $1000$  &  $4.3e-07$ &  $-8.99982$ &  $1000$ & $ 0$ \\ 
{\tt G2 }   &  $-0.803619$ & $-0.271234$ & $1000$  &  $ 0$ &  $-0.173894$ &  $1000$ & $ 0$ \\ 
{\tt G3 }   &  $-1$ & $-0.254056$ & $1000$  &  $2.7e-06$ &  $-0.0436105$ &  $1000$ & $ 0$ \\ 
{\tt G4 }   &  $-30665.5$ & $-30723.2$ & $1000$  &  $0.016$ &  $-30498.1$ &  $1000$ & $ 0$ \\ 
{\tt G5 }   &  $5126.5$ & $5999.48$ & $1000$  &  $7.6e-05$ &  $5976.11$ &  $973$ & $ 0$ \\ 
{\tt G6 }   &  $-6961.81$ & $-7588.41$ & $1000$  &  $0.22$ &  $-6961.26$ &  $1000$ & $ 0$ \\ 
{\tt G7 }   &  $24.3062$ & $147.259$ & $320$  &  $ 0$ &  $30.0327$ &  $1000$ & $ 0$ \\ 
{\tt G8 }   &  $-0.095825$ & $-0.095825$ & $330$  &  $ 0$ &  $-0.095825$ &  $453$ & $ 0$ \\ 
{\tt G9 }   &  $680.63$ & $691.948$ & $1000$  &  $ 0$ &  $683.871$ &  $1000$ & $ 0$ \\ 
{\tt G10}   &  $7049.33$ & $16607.4$ & $1000$  &  $ 0$ &  $7843.26$ &  $1000$ & $ 0$ \\ 
{\tt G11}   &  $0.75$ & $0.749403$ & $1000$  &  $2.5e-07$ &  $0.9998$ &  $331$ & $ 0$ \\ 
{\tt G12}   &  $-1$ & $-1$ & $161$  &  $ 0$ &  $-1$ &  $309$ & $ 0$ \\ 
{\tt G13}   &  $0.0539498$ & $1.45074$ & $1000$  &  $5.9e-07$ &  $2.78621$ &  $1000$ & $ 0$ \\ 
{\tt PVD}   &  $5868.76$ & $3.21995e+06$ & $1000$  &  $2.2$ &  $8115.01$ &  $978$ & $ 0$ \\ 
{\tt TCS}   &  $0.0126653$ & $0.0135886$ & $817$  &  $ 0$ &  $0.0126658$ &  $836$ & $ 0$ \\ 
{\tt WBD}   &  $1.725$ & $3.13076$ & $559$  &  $ 0$ &  $3.01286$ &  $1000$ & $ 0$ \\ 
\hline 
\hline 
 \multicolumn{8}{|c|}{Starting infeasible} \\ 
\hline 
\hline 
{\tt G1 }   &  $-15$ & $-11.0679$ & $1000$  &  $ 0$ &  $-8.93833$ &  $1000$ & $ 0$ \\ 
{\tt G2 }   &  $-0.803619$ & $-0.271234$ & $1000$  &  $ 0$ &  $-0.173894$ &  $1000$ & $ 0$ \\ 
{\tt G3 }   &  $-1$ & $-0.000743875$ & $1000$  &  $ 0$ &  $-1.40301e-06$ &  $1000$ & $ 0$ \\ 
{\tt G4 }   &  $-30665.5$ & $-31003.2$ & $1000$  &  $0.22$ &  $-30643.8$ &  $1000$ & $ 0$ \\ 
{\tt G5 }   &  $5126.5$ & $5603.69$ & $1000$  &  $4.4e+04$ &  $5236.08$ &  $1000$ & $0.63$ \\ 
{\tt G6 }   &  $-6961.81$ & $-3351.17$ & $1000$  &  $ 0$ &  $-6961.81$ &  $1000$ & $ 0$ \\ 
{\tt G7 }   &  $24.3062$ & $49.1948$ & $1000$  &  $ 0$ &  $83.6455$ &  $1000$ & $ 0$ \\ 
{\tt G8 }   &  $-0.095825$ & $-0.095825$ & $204$  &  $ 0$ &  $-0.095825$ &  $525$ & $ 0$ \\ 
{\tt G9 }   &  $680.63$ & $691.948$ & $1000$  &  $ 0$ &  $683.871$ &  $1000$ & $ 0$ \\ 
{\tt G10}   &  $7049.33$ & $9626.33$ & $1000$  &  $4.8$ &  $6013.14$ &  $1000$ & $0.031$ \\ 
{\tt G11}   &  $0.75$ & $0.749403$ & $1000$  &  $2.5e-07$ &  $0.9998$ &  $331$ & $ 0$ \\ 
{\tt G12}   &  $-1$ & $-1$ & $161$  &  $ 0$ &  $-1$ &  $309$ & $ 0$ \\ 
{\tt G13}   &  $0.0539498$ & $1$ & $1000$  &  $ 1$ &  $0.998918$ &  $1000$ & $ 0$ \\ 
{\tt PVD}   &  $5868.76$ & $2284.14$ & $1000$  &  $2.2$ &  $6344.92$ &  $997$ & $ 0$ \\ 
{\tt TCS}   &  $0.0126653$ & $0.000149129$ & $617$  &  $0.97$ &  $-$ &  $-$ & $ -$ \\ 
{\tt WBD}   &  $1.725$ & $3.78146$ & $1000$  &  $ 0$ &  $3.89919$ &  $1000$ & $ 0$ \\ 
\hline 
\end{tabular} 
\caption{Obtained results using a maximal budget of $1000$ (average of $10$ runs).\label{table:merit:1000}}
\end{table} 

Table~\ref{table:merit:1000} gives the obtained results for a maximal budget of $1000$ function evaluations. For both starting strategies (feasible or not) and except few problems, the tested solvers were not able to converge with the regarded budget. ES-MF is shown to have comparable performance with MADS-PB on the tested problems. In fact, with a feasible starting point,  ES-MF is performing well on the problems  {\tt G1}, {\tt G2}, {\tt G3}, {\tt G8}, {\tt G11}, {\tt G12}, and {\tt G13}. While MADS-PB is being the best on the problems  {\tt G4}, 
{\tt G5}, {\tt G6}, {\tt G7}, {\tt G9}, {\tt G10}, {\tt PVD} , {\tt TCS} and {\tt WBD}. Using an infeasible starting point, ES-MF is performing better on the problems {\tt G1}, {\tt G2}, {\tt G3},  {\tt G7}, {\tt G8}, {\tt G11}, {\tt G12}, {\tt TCS}, and {\tt WBD}.

For a large maximal number of function evaluation of $20000$ (Table~\ref{table:merit:20000}), ES-MF and MADS-PB
achieve convergence to a stationary point on more problems. We note that MADS-PB requires more function evaluations for four problems {\tt G2}, {\tt G3}, {\tt G10} and {\tt G13}.
When the starting point is feasible, enlarging the budget  allows having exact feasibility at the solution point. In this case, regarding the value of the objective function, we note that increasing the number of function evaluation does not affect the results compare to the ones obtained using $1000$ function evaluations.
When using an infeasible starting point, the advantage of ES-MF over MADS-PB is more evident compared to the small budget case. In fact, one can observe that ES-MF 
is better than MADS-PB on nine of the 
sixteen tested problems (i.e. {\tt G1}, {\tt G2}, {\tt G5}, {\tt G6}, {\tt G7}, {\tt G8},   {\tt G9},  {\tt G11},  {\tt G12}, {\tt G13} and {\tt WBD}). MADS-PB is shown to be better on the following four problems: {\tt G3}, {\tt G4}, {\tt G6} and {\tt PVD}.  Both solvers did not succeed to find a feasible solution for the problem {\tt G10} , for the {\tt TCS} problem MADS returns a flag error while ES-MF converge to an unfeasible solution.

\begin{table}[!ht] 
\centering 
\begin{tabular}{|c|c|ccc|ccc|} 
\hline 
Name  & Best known &  \multicolumn{3}{|c|}{ES-MF} & \multicolumn{3}{|c|}{MADS-PB}\\ 
\cline{3-8} 
    &        $f_{opt}$  & $f(x_*)$ & $\#f$  & $g(x_*)$  & $f(x_*)$ & $\#f$  & $g(x_*)$ \\ 
\hline 
\hline 
 \multicolumn{8}{|c|}{Starting feasible} \\
\hline 
\hline 
{\tt G1 }   &  $-15$ & $-12.9999$ & $7645$  &  $0$ &  $-9.00207$ &  $4817$ & $ 0$ \\ 
{\tt G2 }   &  $-0.803619$ & $-0.27127$ & $2107$  &  $ 0$ &  $-0.226599$ &  $20000$ & $ 0$ \\ 
{\tt G3 }   &  $-1$ & $-1.05473$ & $5227$  &  $0$ &  $-0.652199$ &  $20000$ & $ 0$ \\ 
{\tt G4 }   &  $-30665.5$ & $-31009.8$ & $3664$  &  $0.17$ &  $-30503.1$ &  $3064$ & $ 0$ \\ 
{\tt G5 }   &  $5126.5$ & $5976.79$ & $244$  &  $ 0$ &  $5976.79$ &  $1132$ & $ 0$ \\ 
{\tt G6 }   &  $-6961.81$ & $-6942.57$ & $1261$  &  $0$ &  $-6961.81$ &  $1381$ & $ 0$ \\ 
{\tt G7 }   &  $24.3062$ & $147.259$ & $320$  &  $ 0$ &  $25.5112$ &  $5160$ & $ 0$ \\ 
{\tt G8 }   &  $-0.095825$ & $-0.095825$ & $330$  &  $ 0$ &  $-0.095825$ &  $453$ & $ 0$ \\ 
{\tt G9 }   &  $680.63$ & $680.63$ & $5071$  &  $0$ &  $680.799$ &  $3568$ & $ 0$ \\ 
{\tt G10}   &  $7049.33$ & $15116.7$ & $5094$  &  $0.02$ &  $7687.35$ &  $5067$ & $ 0$ \\ 
{\tt G11}   &  $0.75$ & $0.75$ & $1177$  &  $0$ &  $0.9998$ &  $331$ & $ 0$ \\ 
{\tt G12}   &  $-1$ & $-1$ & $161$  &  $ 0$ &  $-1$ &  $309$ & $ 0$ \\ 
{\tt G13}   &  $0.0539498$ & $1$ & $2287$  &  $ 0$ &  $2.66335$ &  $20000$ & $ 0$ \\ 
{\tt PVD}   &  $5868.76$ & $396143$ & $3179$  &  $0.0037$ &  $7890.36$ &  $1385$ & $ 0$ \\ 
{\tt TCS}   &  $0.0126653$ & $0.0135886$ & $817$  &  $ 0$ &  $0.0126658$ &  $836$ & $ 0$ \\ 
{\tt WBD}   &  $1.725$ & $3.13076$ & $559$  &  $ 0$ &  $3.01285$ &  $1292$ & $ 0$ \\ 
\hline 
\hline 
 \multicolumn{8}{|c|}{Starting infeasible} \\ 
\hline 
\hline 
{\tt G1 }   &  $-15$ & $-14.9951$ & $3901$  &  $0$ &  $-8.99999$ &  $4222$ & $ 0$ \\ 
{\tt G2 }   &  $-0.803619$ & $-0.27127$ & $2107$  &  $ 0$ &  $-0.226599$ &  $20000$ & $ 0$ \\ 
{\tt G3 }   &  $-1$ & $-0.000743875$ & $1015$  &  $ 0$ &  $-0.00413072$ &  $20000$ & $ 0$ \\ 
{\tt G4 }   &  $-30665.5$ & $-30990.3$ & $2746$  &  $0.19$ &  $-30665.4$ &  $2846$ & $ 0$ \\ 
{\tt G5 }   &  $5126.5$ & $5334.29$ & $2782$  &  $0$ &  $5240.95$ &  $5291$ & $0.008$ \\ 
{\tt G6 }   &  $-6961.81$ & $-6961.81$ & $2500$  &  $0$ &  $-6961.81$ &  $1078$ & $ 0$ \\ 
{\tt G7 }   &  $24.3062$ & $24.3062$ & $11562$  &  $0$ &  $27.1991$ &  $12426$ & $ 0$ \\ 
{\tt G8 }   &  $-0.095825$ & $-0.095825$ & $204$  &  $ 0$ &  $-0.095825$ &  $525$ & $ 0$ \\ 
{\tt G9 }   &  $680.63$ & $680.63$ & $5071$  &  $0$ &  $680.799$ &  $3568$ & $ 0$ \\ 
{\tt G10}   &  $7049.33$ & $9681.53$ & $7195$  &  $0.082$ &  $6192.82$ &  $20000$ & $0.021$ \\ 
{\tt G11}   &  $0.75$ & $0.75$ & $1177$  &  $0$ &  $0.9998$ &  $331$ & $ 0$ \\ 
{\tt G12}   &  $-1$ & $-1$ & $161$  &  $ 0$ &  $-1$ &  $309$ & $ 0$ \\ 
{\tt G13}   &  $0.0539498$ & $0.438745$ & $12367$  &  $0$ &  $0.996284$ &  $20000$ & $ 0$ \\ 
{\tt PVD}   &  $5868.76$ & $2.57711e+12$ & $5575$  &  $1.1e+04$ &  $6342.85$ &  $1515$ & $ 0$ \\ 
{\tt TCS}   &  $0.0126653$ & $0.000149129$ & $617$  &  $0.97$ &  $-$ &  $-$ & $ -$ \\ 
{\tt WBD}   &  $1.725$ & $2.70832$ & $2971$  &  $ 0$ &  $3.7413$ &  $2801$ & $ 0$ \\ 
\hline 
\end{tabular} 
\caption{Obtained results using a maximal budget of $20000$ (average of $10$ runs).\label{table:merit:20000}}
\end{table}

\subsection{Application to a multidisciplinary design optimization problem}
MDO problems are typical real applications where one has to minimize a given objectif function subject to a set of relaxable and unrelaxable constraints. In this section, we test our proposed algorithm in an MDO problem taken from \cite{RBGramacy_SLeDIgabel_2015, CTribes_JFDube_JYTepanier_2005} where a simplified wing design (built around a tube) is regarded. In this test case, one tries to find the best 
wing design considering interdisciplinary trade-off, which is between aerodynamic (a minimum drag) and structural (a minimum weight) performances. Typically, the two disciplines are evaluated sequentially
by means of a fixed point iterative method until the coupling is solved with the appropriate accuracy. More details on the problem are given in \cite{RBGramacy_SLeDIgabel_2015}.

\begin{table}[!ht] 
\centering 
\begin{tabular}{|l|c|c|c|c|} 
\hline 
Design variable  & Best known & $x_{\LB}$& $x_{\UB}$ & Starting guess\\ 
\hline 
\hline 
Wing span $x_1$&44.19 & 30.0& 45.0 &37.5\\
Root cord $x_2$& 6.75& 6.0& 12.0 &9.0\\
Taper ratio $x_3$& 0.28 & 0.28& 0.50 &0.39\\
Angle of attack at root $x_4$& 3.0& -1.0 & 3.0 &1.1\\
Angle of attack at tip and at rest $x_5$&0.72 & -1.0& 3.0&1.0\\
Tube external diameter $x_6$& 4.03 & 1.6& 5.0&3.3\\
Tube thickness $x_7$& 0.3 & 0.3& 0.79& 0.545\\
\hline
\hline
Objective function value & $-16.61011$ & $10^{20}$ & $-8.0157$&$-10.93552$\\
\hline
Constraint violation & $0$ &$3\times10^{40}$  &  $0$ &$2.01\times10^{7}$\\
\hline
\end{tabular}
\label{table:mdo:info}
\caption{Description of the MDO problem variables. The coordinates and the value
of the best known solution have been rounded.}
\end{table}

The optimization problem has $7$ design variables, see Table \ref{table:mdo:info}. In addition to the bounds, the test case has three nonlinear constraints which are treated as relaxable. The bound contraints $x_{\LB}$ and $x_{\UB}$ are regarded as unrelaxable and will be treated using $l_2$ projection approach. We run our code using the proposed starting guess $x_0=(37.5,~9.0,~0.39,~1.1,~1.0,~3.3,~0.545)$ as in \cite{CTribes_JFDube_JYTepanier_2005}. The provided starting point is infeasible towards the nonlinear constraints. A large maximal number of function evaluation of $20000$ is used to quantify the asymptotic efficiency and the robustness of the tested methods. 
%

From the obtained results, one can see that ES-MF converges to an asymptotically feasible solution $x_*=(43.043,~ 6.738,~ 0.28,~ 3.000,~ 0.749,~ 3.942,~ 0.300)$  with $f(x_*)=-16.61198$ and $g(x_*)=2\times 10^{-14}$ using $12781$ function evaluations. For MADS-PB,   using $3848$ function evaluations, converges to the feasible point $x_*= (44.170,~6.746,~0.28,~ 3.000,~0.721,~4.028,~0.300)$ with  $f(x_*)=-16.60627$.
We note that while MADS-PB seems to converge to a local minimum but with a reasonable budget, the obtained solution using ES-MF solver seems to be better than even the best know optimum but with a very small constraints violation. To confirm the obtained performance of ES-MF on this MDO problem, we test also 10 random starting points generated inside the hyper-cube $x_{\LB}\times x_{\UB}$ as follows
\[
x_0= \alpha x_{\LB} + (1-\alpha) x_{\UB},
\]
for 10 values of $\alpha$ uniformly generated in $(0,1)$.
\begin{table}[!ht] 
\centering 
\begin{tabular}{|c|c|ccc|ccc|} 
\hline 
Problem  &  f at $x_0$ &  \multicolumn{3}{|c|}{ES-MF} & \multicolumn{3}{|c|}{MADS-PB}\\ 
\cline{3-8} 
 Instance   &        $f(x_0)$  & $f(x_*)$ &  $g(x_*)$  & $\#f$  & $f(x_*)$ & $\#f$  & $g(x_*)$ \\ 
\hline 
\hline 
{\tt MDO1 }   &  $-10.93552$ & $-16.61198$ & $1.859e-14$  &  $13491$ &  $-16.48513285$ &  $6231$ & $ 0$ \\ 
{\tt MDO2 }   &  $-0.803619$ & $-16.61198$ & $1.862e-14$  &  $ 13731$ &  $-16.60096482$ &  $3894$ & $ 0$ \\ 
{\tt MDO3 }   &  $-1$ & $-16.61198$ & $1.975e-14$  &  $11811$ &  $-16.50912211$ &  $3758$ & $ 0$ \\ 
{\tt MDO4 }   &  $-30665.5$ & $-16.61198$ & $1.256e-14$  &  $11031$ &  $-16.38935905$ &  $7061$ & $ 0$ \\ 
{\tt MDO5 }   &  $5126.5$ & $-16.61198$ & $1.944e-14$  &  $ 12681$ &  $-16.41189544$ &  $6053$ & $ 0$ \\ 
{\tt MDO6 }   &  $-6961.81$ & $-16.61198$ & $1.899e-14$  &  $14571$ &  $-16.5696096$ &  $5138$ & $ 0$ \\ 
{\tt MDO7 }   &  $24.3062$ & $-16.61198$ & $1.904e-14$  &  $13731$ &  $-16.60579898$ &  $5357$ & $ 0$ \\ 
{\tt MDO8 }   &  $-0.095825$ & $-16.61198$ & $2.147e-14$  &  $11371$ &  $-16.59635846$ &  $3662$ & $ 0$ \\ 
{\tt MDO9 }   &  $680.63$ & $-16.61198$ & $1.726e-14$  &  $12321$ &  $-16.60317325$ &  $4979$ & $ 0$ \\ 
{\tt MDO10}   &  $7049.33$ & $-16.61198$ & $1.473e-14$  &  $9671$ &  $-16.04842191$ &  $3296$ & $ 0$ \\ 
\hline 
\end{tabular} 
\caption{Comparison results obtained on the tested MDO problem using $10$ different starting points and with a maximal budget of $20000$ (average of $10$ runs for each starting point).\label{table:mdo:20000}}
\end{table} 

The obtained results using MADS-PB and ES-MF are given in Table \ref{table:mdo:20000}. One can see that for all the chosen starting points ES-MF converges to the global minimum of the MDO problem while MADS-PB gets trapped by local minima. We note that in general ES-MF requires more function evaluations than MADS for all the tested instances. 

\section{Conclusion}
\label{sec:conclusion}
In this paper, we propose a globally convergent class of ES algorithms where a merit function (with eventually a restoration procedure) is used to decide and control the distribution of the generated points. The obtained algorithm generalized the work~\cite{YDiouane_SGratton_LNVicente_2015_b} by including relaxable constraints in the spirit of what is done in~\cite{SGratton_LNVicente_2014}. To the best of our knowledge, the proposed approach is the first globally convergent framework that handles relaxable and unrelaxable constraints in the context of ES's. The proposed convergence analysis was organized depending on the number of times Restoration is
entered. 

We provided preliminary numerical tests on well-known global optimization problems as well as a multidisciplinary design optimization problem. The obtained results showed the potential of the merit approach compared to the progressive barrier approach (proposed in MADS algorithm). 
Finally, we acknowledge that, we are concurrently working on performing a study of extensive numerical experiments to analyse the performance of the proposed algorithm.
\appendix
\section{Appendix} \label{sec:Appendix}
\subsection{Case where algorithm is never left}
\begin{theorem} \label{th:4}
Assume that $f$ is bounded below and that the restoration is entered and never left.

$(i)$ Then there exists a refining subsequence.

$(ii)$ Let $x_* \in \Omega_{\nr}$ be the limit point of a convergent 
subsequence of unsuccessful of iterates $\{ x_k \}_K$ for which $\lim_{k \in K} \sigma_k = 0$.
Assume that~$g$ is Lipschitz continuous near~$x_*$, 
and let $d \in T_{\Omega_{\nr}}^{\h}(x_*)$ a corresponding refining direction. 
Then either $g(x_*) = 0$ or  $g^\circ(x_*;d) \geq 0$.

$(iii)$ Let $x_* \in \Omega_{\nr}$ be the limit point of a convergent 
subsequence of unsuccessful of iterates $\{ x_k \}_K$ for which $\lim_{k \in K} \sigma_k = 0$.
Assume that~$g$ and $f$ are Lipschitz continuous near~$x_*$, 
and let $d \in T_{\Omega_{\nr}}^{\h}(x_*)$ a corresponding refining direction such that  $g^\circ(x_*;d) \leq 0$. 
Then  $f^\circ(x_*;d) \geq 0$.

$(iv)$ Assume that the interior of the set $T(x_*)$ given in (\ref{T_x}) is non-empty. 
Let the set of refining directions be dense in $T(x_*)$. Then $f^\circ(x_*,v) \geq 0$ for 
all $v \in T^{\cl}_{\Omega_{\nr}}(x_*)$ such that $g^\circ(x_*,v) \leq 0$, and $x_*$ is a Clarke stationary point of the problem (\ref{c:Prob1}).
\end{theorem}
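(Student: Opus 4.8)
The plan is to retrace, essentially step by step, the analysis of Section~\ref{sec:global_convergence}, exploiting the fact that when the restoration is entered and never left the accept/reject mechanism of Algorithm~\ref{alg:M-ES-Resotoration} mimics that of Algorithm~\ref{alg:M-ES-Main}. The only new structural observation needed is the following: since the ``leave Restoration'' branch is never taken, every iteration of Algorithm~\ref{alg:M-ES-Resotoration} is either \emph{successful}, in which case $g(x_{k+1}) < g(x_k) - \rho(\sigma_k)$ together with $g(x_k) > C\rho(\sigma_k)$, or \emph{unsuccessful}, in which case $\sigma_{k+1} = \beta_k \sigma_k$ with $\beta_k \in (\beta_1,\beta_2)$ \emph{and} $M(x_{k+1}^{\trial}) \ge M(x_k)$ (the latter being automatic when $x_{k+1}^{\trial} \notin \Omega_{\nr}$, since then $M(x_{k+1}^{\trial}) = +\infty$).

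For $(i)$, I would first establish $\liminf_{k \to +\infty} \sigma_k = 0$ by the argument of Lemma~\ref{liminf}. Suppose $\sigma_k > \sigma$ for all $k \ge \bar k$, so that $\rho(\sigma_k) \ge \rho(\sigma) > 0$. If infinitely many iterations after $\bar k$ are successful, then $g(x_{k+1}) < g(x_k) - \rho(\sigma)$ along them and $g(x_k) \to -\infty$, contradicting $g \ge 0$; if only finitely many are successful, then all subsequent iterations are unsuccessful and $\sigma_k \to 0$, again a contradiction. Hence $\liminf \sigma_k = 0$, and then, exactly as in the proof of Theorem~\ref{liminf2}, since the step size is non-decreasing on successful iterations and $\sigma_0 > 0$, any index at which $\sigma$ becomes small must be preceded by an unsuccessful iteration; extracting those iterations yields an infinite subsequence $K$ of unsuccessful iterates with $\lim_{k \in K}\sigma_k = 0$, which is the sought refining subsequence.

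For $(ii)$, I would reproduce the proof of Theorem~\ref{th:1} essentially verbatim. Restricting to the indices $k \in K$ with $x_{k+1}^{\trial} = x_k + \sigma_k a_k \in \Omega_{\nr}$ --- those attached to the refining direction $d$ --- the fact that such an iteration is unsuccessful in Algorithm~\ref{alg:M-ES-Resotoration} yields exactly the dichotomy $g(x_k + \sigma_k a_k) \ge g(x_k) - \rho(\sigma_k)$ or $g(x_k) \le C\rho(\sigma_k)$ used in Theorem~\ref{th:1}. In the second alternative the continuity of $g$ together with $\sigma_k \to 0$ forces $g(x_*) = 0$; in the first, the Clarke-Jahn estimate for $g^\circ(x_*;d)$ --- using $a_k/\|a_k\| \to d \in T_{\Omega_{\nr}}^{\h}(x_*)$, the boundedness of $\|a_k\|$, the Lipschitz constant of $g$, and the algebraic cancellation of the $\rho(\sigma_k)/(\sigma_k\|a_k\|)$ term against the $+\rho(\sigma_k)$ coming from unsuccessfulness --- gives $g^\circ(x_*;d) \ge 0$. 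For $(iii)$, I would mimic the intermediate optimality result of Section~\ref{sec:global_convergence}: along the relevant subsequence one has $x_{k+1}^{\trial} = x_k + \sigma_k a_k \in \Omega_{\nr}$ and, the iteration being unsuccessful, $M(x_k + \sigma_k a_k) \ge M(x_k)$, hence $f(x_k + \sigma_k a_k) - f(x_k) \ge -\bar\delta\,(g(x_k + \sigma_k a_k) - g(x_k))$ --- a bound slightly cleaner than the one in Section~\ref{sec:global_convergence}, as no $\rho(\sigma_k)$ term appears. Dividing by $\sigma_k\|a_k\|$, inserting this into the lower estimate for $f^\circ(x_*;d)$, using the hypothesis $g^\circ(x_*;d) \le 0$ to control the $g$-difference along $d$, and invoking the Lipschitz continuity of $f$ and $g$ to send the remainder terms to zero, one obtains $f^\circ(x_*;d) \ge 0$.

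Finally, $(iv)$ follows from $(ii)$ and $(iii)$ by the density argument of \cite[Theorem 4.4]{SGratton_LNVicente_2014}: when the refining directions are dense in $T(x_*)$, the inequality $f^\circ(x_*;d) \ge 0$, valid for the hypertangent refining directions $d$ with $g^\circ(x_*;d) \le 0$, extends by the continuity of $d \mapsto f^\circ(x_*;d)$ and $d \mapsto g^\circ(x_*;d)$ to every $v \in T_{\Omega_{\nr}}^{\cl}(x_*)$ with $g^\circ(x_*;v) \le 0$, so that $x_*$ is a Clarke stationary point of~(\ref{c:Prob1}). I expect $(i)$ to be the only genuinely new step: unlike in Section~\ref{sec:global_convergence}, the merit function cannot be used to drive $\sigma_k$ to zero here --- a decrease of $M$ would make the algorithm leave Restoration, contrary to the hypothesis --- so the argument for $(i)$ must rest entirely on the boundedness below of the constraint-violation function $g$ by zero; once $\liminf \sigma_k = 0$ is in hand, parts $(ii)$ through $(iv)$ are routine transcriptions of results already established.
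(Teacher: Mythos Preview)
Your proposal is correct and follows essentially the same approach as the paper's own proof, which is equally terse and simply points back to the arguments of Lemma~\ref{liminf}, Theorem~\ref{th:1}, and Theorem~\ref{th:2} for parts $(i)$--$(iii)$ and to \cite[Theorem~4.4]{SGratton_LNVicente_2014} for part~$(iv)$. Your structural observation that ``never leaving Restoration'' forces $M(x_{k+1}^{\trial}) \ge M(x_k)$ at every unsuccessful step is exactly what the paper uses in its one-line proof of~$(iii)$, and your remark that only the boundedness below of $g$ (not of $M$) is needed for~$(i)$ is spot on.
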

\begin{proof}
 $(i)$ There must exist a refining subsequence $K$ within this call of the restoration, by applying the same argument of the case where one has $g(x_{k+1}) < g(x_k) - 
\rho( \sigma_k )$ and $g(x_{k}) > C \rho(\sigma_k)$ for an infinite subsequence of 
successful iterations (see the proof of Theorem~\ref{liminf}). By assumption there exists a subsequence $K' \subseteq K$ such that the 
sequence $\{{a_k }/{ \| a_k \|} \}_{k \in K'}$ converges to $d \in T_{\Omega_{\nr}}^{\h}(x_*) $ in $K'$ and the sequence $\{ \| a_k \| \sigma_k \}_{k \in K'}$ goes to zero in $K'$.
Thus one must have necessarily for $k$ sufficiently large in $K'$, $ x_{k+1}^{\trial} = x_k + \sigma_k a_k \in {\Omega_{\nr}}$. 

$(ii)$ Since the iteration $k \in K'$ is unsuccessful in the Restoration, $g(x_k + \sigma_k a_k) \geq g(x_k) - 
\rho( \sigma_k )$ or $g(x_{k+1}) \leq C \rho(\sigma_k)$, and the proof follows an argument already seen (see the proof of Theorem~\ref{th:1}).

$(iii)$ Since at the unsuccessful iteration $k \in K'$, Restoration is never left, so one has $M(x_k + \sigma_k a_k) \; \geq \; M(x_k)$, 
and the proof follows an argument already seen (see the proof of Theorem~\ref{th:2}).

$(iv)$ The same proof as \cite[Theorem 4.4]{SGratton_LNVicente_2014}.
\end{proof}
\subsection{Case where restoration algorithm is entered and left infinite times}
\begin{theorem} \label{th:5}
Consider Algorithm \ref{alg:M-ES-Main} and assume that $f$ is bounded below. Assume that Restoration is entered and left an infinite number of times.

$(i)$ Then there exists a refining subsequence.

$(ii)$ Let $x_* \in \Omega_{\nr}$ be the limit point of a convergent 
subsequence of unsuccessful of iterates $\{ x_k \}_K$ for which $\lim_{k \in K} \sigma_k = 0$.
Assume that~$g$ is Lipschitz continuous near~$x_*$, 
and let $d \in T_{\Omega_{\nr}}^{\h}(x_*)$ a corresponding refining direction. 
Then either $g(x_*) = 0$ (implying $x_* \in \Omega_r$ and thus $x_* \in \Omega $) or  $g^\circ(x_*;d) \geq 0$.

$(iii)$ Let $x_* \in \Omega_{\nr}$ be the limit point of a convergent 
subsequence of unsuccessful of iterates $\{ x_k \}_K$ for which $\lim_{k \in K} \sigma_k = 0$.
Assume that~$g$ and $f$ are Lipschitz continuous near~$x_*$, 
and let $d \in T_{\Omega_{\nr}}^{\h}(x_*)$ a corresponding refining direction such that  $g^\circ(x_*;d) \leq 0$. 
Then  $f^\circ(x_*;d) \geq 0$.

$(iv)$ Assume that the interior of the set $T(x_*)$ given in (\ref{T_x}) is non-empty. 
Let the set of refining directions be dense in $T(x_*)$.Then $f^\circ(x_*,v) \geq 0$ for 
all $v \in T^{\cl}_{\Omega_{\nr}}(x_*)$ such that $g^\circ(x_*,v) \leq 0$, and $x_*$ is a Clarke stationary point.
\end{theorem}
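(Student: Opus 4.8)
The statement to prove is Theorem~\ref{th:5}, which handles the case where Restoration is entered and left an infinite number of times. The plan is to mirror the structure of the finitely-many-restorations analysis (Lemma~\ref{liminf}, Theorems~\ref{liminf2}, \ref{th:1}, \ref{th:2}, \ref{th:3}) and of Theorem~\ref{th:4}, reusing the proofs of \cite[Theorems 4.2 and 4.4]{SGratton_LNVicente_2014} where the appendix already does so. The key observation that makes this case tractable is that every time Restoration is left, the Main algorithm resumes from a point $x_{k+1}$ satisfying $M(x_{k+1}) < M(x_k)$ (from the exit test in Algorithm~\ref{alg:M-ES-Resotoration}, Step~3), so the merit values at the hand-off iterates form a strictly decreasing sequence; combined with $f$ and $g$ bounded below, $M$ is bounded below, so these merit decreases cannot be bounded away from zero.

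For part $(i)$, I would argue by contradiction that $\liminf_{k\to\infty}\sigma_k = 0$. Suppose $\sigma_k > \sigma > 0$ for all $k \geq \bar k$. Then $\rho(\sigma_k) \geq \rho(\sigma) > 0$ on all iterations after $\bar k$. Inside each Restoration call, a successful iteration forces $g(x_{k+1}) < g(x_k) - \rho(\sigma)$; inside the Main algorithm, a successful iteration forces either $g(x_{k+1}) < g(x_k) - \rho(\sigma)$ or $M(x_{k+1}) < M(x_k) - \rho(\sigma)$; and each exit from Restoration forces $M(x_{k+1}) < M(x_k)$. Since Restoration is entered and left infinitely often, there are infinitely many iterations of at least one of these types, and each such type produces an unbounded decrease in a quantity ($g$ or $M$) that is bounded below — a contradiction. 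Hence some subsequence drives $\sigma_k$ to zero, and since step sizes are only reduced multiplicatively by $\beta_k \in (\beta_1,\beta_2)$ on unsuccessful iterations (and increased on successful ones), a standard extraction (as in Theorem~\ref{liminf2} and Theorem~\ref{th:4}$(i)$) yields an infinite subsequence $K$ of unsuccessful Main-algorithm iterates with $\sigma_k \to 0$; if $\{x_k\}$ is bounded we further extract a convergent refining subsequence, and a refining direction exists along $\{a_k/\|a_k\|\}_K$.

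Parts $(ii)$, $(iii)$, $(iv)$ then follow exactly the arguments already given for the finite-restoration case. For $(ii)$: an unsuccessful Main iteration $k \in K'$ satisfies $g(x_k+\sigma_k a_k) \geq g(x_k) - \rho(\sigma_k)$ or $g(x_k) \leq C\rho(\sigma_k)$; the first branch gives $g^\circ(x_*;d) \geq 0$ via the Clarke--Jahn derivative estimate and the Lipschitz bound on $g_k$, exactly as in the proof of Theorem~\ref{th:1}, and the second branch gives $g(x_*) = 0$ by continuity of $g$ and $\sigma_k \to 0$. For $(iii)$: an unsuccessful Main iteration satisfies $M(x_k + \sigma_k a_k) \geq M(x_k) - \rho(\sigma_k)$ (note the trial point is not a Successful point and not a Restoration identifier), which after dividing by $\sigma_k\|a_k\|$ and using $g^\circ(x_*;d) \leq 0$ together with the Lipschitz continuity of $f$ and $g$ yields $f^\circ(x_*;d) \geq 0$, verbatim as in Theorem~\ref{th:2}. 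For $(iv)$: with the interior of $T(x_*)$ nonempty and the refining directions dense in $T(x_*)$, one invokes the argument of \cite[Theorem 4.4]{SGratton_LNVicente_2014} to pass from directional nonnegativity on a dense set to $f^\circ(x_*;v) \geq 0$ for all $v \in T^{\cl}_{\Omega_{\nr}}(x_*)$ with $g^\circ(x_*;v)\leq 0$, hence Clarke stationarity of~(\ref{c:Prob1}).

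The main obstacle — and the only place where this case genuinely differs from the finite-restoration case — is part $(i)$: one must be careful that the refining subsequence $K$ consists of unsuccessful iterates of the \emph{Main} algorithm (not of Restoration), since parts $(ii)$--$(iv)$ use the Main-algorithm acceptance tests. If infinitely many unsuccessful Main iterations with $\sigma_k\to 0$ cannot be found, then either the algorithm is eventually always in Restoration (contradicting "left infinitely often") or there are only finitely many unsuccessful Main iterations overall, in which case $\sigma_k$ is bounded away from zero along the Main iterations and all the decrease must come from successful iterations or Restoration exits, which again contradicts boundedness below of $g$ and $M$. Making this bookkeeping airtight — tracking which iterations belong to which phase and ensuring the telescoping of merit/violation decreases is valid across phase transitions — is the crux; once it is done, the remaining parts are routine transcriptions of earlier proofs.
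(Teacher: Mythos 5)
There is a genuine gap, and it sits exactly where this case differs from the finite\-/restoration one. You anchor the whole argument on a refining subsequence of unsuccessful \emph{Main} iterations and then invoke the Main rejection inequalities in parts (ii)--(iii). But when Restoration is entered and left infinitely often, there need not exist infinitely many unsuccessful Main iterations at all: every Main iteration can be either successful or a Restoration identifier, with all step-size reductions occurring inside the Restoration calls. Your fallback bookkeeping does not repair this: finitely many unsuccessful Main iterations does \emph{not} imply that $\sigma_k$ is bounded away from zero along Main iterations (the step size reduced inside a Restoration call is carried back to Main on exit), and infinitely many Restoration exits do \emph{not} force $M\to-\infty$, because the exit test of Algorithm~\ref{alg:M-ES-Resotoration} only requires the simple decrease $M(x_{k+1}^{\trial})<M(x_k)$, with no $\rho(\sigma_k)$ margin. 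So your claim in part (i) that ``each such type produces an unbounded decrease'' fails for the exit type, and the contradiction you rely on (and hence the existence of the subsequence your parts (ii)--(iv) are built on) does not go through as written.

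The paper proceeds differently: it takes the refining subsequence directly from the iterations $K_2$ at which Restoration is \emph{left}. These occur infinitely often by hypothesis, they are unsuccessful iterations of the Restoration algorithm (the paper argues the step size is reduced and not increased there, giving $\sigma_k\to 0$ along $K_2$ without any boundedness-below telescoping), and the violated Restoration acceptance test supplies exactly the inequality $g(x_k+\sigma_k a_k)\ge g(x_k)-\rho(\sigma_k)$ or $g(x_k)\le C\rho(\sigma_k)$ needed for the feasibility part (ii), as in Theorem~\ref{th:1}. The optimality part (iii) is then obtained from the merit inequality $M(x_k+\sigma_k a_k)\ge M(x_k)$ available on the Restoration side (as in Theorem~\ref{th:4}(iii), coming from the Restoration identifier/failed exit test), not from the Main inequality $M(x_{k+1}^{\trial})\ge M(x_k)-\rho(\sigma_k)$ you use. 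Your transcriptions of Theorems~\ref{th:1} and~\ref{th:2} would be fine if your subsequence existed; the missing idea is to anchor the analysis on the Restoration entry/exit iterations rather than on unsuccessful Main iterations, which is the actual content of this appendix case.
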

\begin{proof}
$(i)$ Let $K_1 \subseteq K $ and $K_2 \subseteq K $ be two subsequences where Restoration is entered and left respectively. 

Since the iteration $k \in K_2$ is unsuccessful in the Restoration, one knows that the step size $\sigma_k$ is reduced and never increased, one then obtains that $\sigma_k$ tends to zero.
By assumption there exists a subsequence $K' \subseteq K_2$ such that the 
sequence $\{{a_k }/{ \| a_k \|} \}_{k \in K'}$ converges to $d \in T_{\Omega_{\nr}}^{\h}(x_*) $ 
in $K_2$ and the sequence $\{ \| a_k \| \sigma_k \}_{k \in K'}$ goes to zero in $K'$.

$(ii)$ For all $k \in K'$, one has $g(x_k + \sigma_k a_k) \geq g(x_k) - 
\rho( \sigma_k )$ or $g(x_{k}) \leq C \rho(\sigma_k)$, one concludes that either $g(x_*) = 0$ or  $g^\circ(x_*;d) \geq 0$.

$(iii)$ For all $k \in K'$, one has $M(x_k + \sigma_k a_k) \; \geq \; M(x_k)$,
and from this we conclude that $f^\circ(x_*;d) \geq 0$ if  $g^\circ(x_*;d) \leq 0$.

$(iv)$ The same proof as \cite[Theorem 4.4]{SGratton_LNVicente_2014}.
\end{proof}

\small

\bibliographystyle{plain}
\bibliography{ref-merit-es}

\end{document}